\newcommand{\hide}[1]{}\setlength{\textheight}{21cm} \textwidth16cm \hoffset=-2truecm
\numberwithin{equation}{section}
\newcommand\smallO{
  \mathchoice
    {{\scriptstyle\mathcal{O}}}
    {{\scriptstyle\mathcal{O}}}
    {{\scriptscriptstyle\mathcal{O}}}
    {\scalebox{.7}{$\scriptscriptstyle\mathcal{O}$}}
  }
\newtheorem{corollary}{Corollary}[section]
\newtheorem{lemma}{Lemma}[section]
\newtheorem{theorem}{Theorem}[section]
\newtheorem{proposition}{Proposition}[section]
\theoremstyle{definition}
\newtheorem{remark}{Remark}[section]
\DeclareMathOperator{\De}{\Delta}
\DeclareMathOperator{\ArE}{A_{\textrm{e}}}
\DeclareMathOperator{\ArS}{A_{\textrm{s}}}
\DeclareMathOperator{\Le}{L}
\DeclareMathOperator{\LeE}{L_{\textrm{e}}}
\DeclareMathOperator{\LeS}{L_{\textrm{s}}}
\DeclareMathOperator{\AreS}{\mathcal{A}_{\textrm{s}}}
\DeclareMathOperator{\LenS}{\mathcal{L}_{\textrm{s}}}
\DeclareMathOperator{\T}{\mathbb{T}}
\DeclareMathOperator{\Rot}{Rot}
\DeclareMathOperator{\D}{\mathbb{D}}
\DeclareMathOperator{\RE}{Re}
\DeclareMathOperator{\CC}{\widehat{\mathbb{C}}}
\DeclareMathOperator{\IM}{Im}
\begin{document}
\title[Geometric versions of Schwarz's lemma for spherically convex functions]{GEOMETRIC VERSIONS OF SCHWARZ'S LEMMA FOR\\[2mm] SPHERICALLY CONVEX FUNCTIONS}

\author[Maria Kourou]{Maria Kourou$^{\S\ }$}
\thanks{$^{\S}$Partially supported by the Alexander von Humboldt Foundation.}
\address{Department of Mathematics, University of W\"urzburg, 97074, W\"urzburg, Germany}
\email{ maria.kourou@mathematik.uni-wuerzburg.de}   

\author[Oliver Roth]{Oliver Roth}  
\address{Department of Mathematics, University of W\"urzburg, 97074, W\"urzburg, Germany}
\email{ roth@mathematik.uni-wuerzburg.de}   

%
%
\subjclass[2020]{Primary 30C20, 30C80,  30C45; Secondary 52A10, 51M10, 51M25}

\date{}
\keywords{Spherical metric, spherical convexity, spherical length, spherical area, Gauss-Bonnet formula, isoperimetric inequality, spherical curvature}

\begin{abstract}
  We prove several sharp distortion and monotonicity theorems for spherically convex functions  defined on the unit disk  involving geometric quantities such as spherical length, spherical area and total spherical curvature. These results can be viewed as geometric variants of the classical Schwarz lemma for spherically convex functions.
 
\end{abstract}

 \maketitle

\section{Introduction and Results}

Let  $f$ be  a holomorphic function on the unit disk $\D = \{z \in  \mathbb{C} : |z| < 1 \}$, and let $\T =\partial \D$ be the unit circle. In   \cite[p.~165, Problem 309]{polyaszego}, 
G. P{\'o}lya and G. Szeg\H{o} observed  that if $\LeE$ denotes the 
euclidean length of a curve, the function
\begin{equation}\label{length}
r \mapsto \frac{\LeE f (r \T)}{ \LeE (r \T)}= \frac{1}{2 \pi} \int\limits_{0}^{2\pi} \left| f^{\prime}(r e^{it}) \right| dt,
\end{equation}
 is increasing on the interval $(0,1)$. Much more recently,  
it  was proved by Aulaskari \& Chen \cite{aulask} and  by Burckel, Marshall, Minda, Poggi--Corradini \& Ransford \cite{5} that if $\ArE$ denotes the euclidean area of a domain,  
the function 
\begin{equation}\label{area}
r \mapsto \frac{\ArE f(r \D)}{\ArE (r\D)}=\frac{1}{\pi r^2} \ArE f(r \D), 
\end{equation}
is also monotonically increasing. These investigations have since then led to a series of monotonicity results comparing other euclidean geometric and  euclidean potential theoretic quantities of the image $f(r \D)$ with those corresponding to $r\D$. This way, quantitative bounds on the growth behaviour of the image $f(r \D)$ have been established, leading to several distortion theorems. Examples of such euclidean geometric and potential theoretic quantities are the diameter, $n$-th diameter, logarithmic capacity, inner radius, and total curvature; see \cite{ bets1, bets2, 5, kourou1}.

Even more recently, starting with the work of Betsakos \cite{bets4}, many of these euclidean geometric Schwarz--type lemmas have been carried over to the hyperbolic setting.\hide{ and  holomorphic self--maps of the unit disk $\D$ equipped with the hyperbolic metric to  \textit{hyperbolic} analogs of these euclidean geometric results  have been obtained for the case that the unit disk is endowed with the hyperbolic metric.} For instance, in \cite{bets4}, results are proved concerning the hyperbolic-area-radius of $f(r \D)$ and its hyperbolic capacity.
Furthermore, the notion of hyperbolic convexity has led to corresponding monotonicity theorems regarding total hyperbolic curvature, hyperbolic length and area, see \cite{kourou2}.

The purpose of the present work is to establish sharp estimates and monotonicity results for geometric quantities such as spherical length, spherical area and total spherical curvature for  \textit{spherically convex}  functions on the unit disk. Spherically convex functions have been investigated by many authors, including Wirths, K\"uhnau, Ma, Minda, Mej{\'i}a, Pommerenke and others, see the references \cite{kuehnau, mamindasphlinear, sphkcon, sph2, mejiapomsph, wirths} and the references therein. In studying spherical analogs of the aforementioned euclidean and hyperbolic monotonicity results and geometric Schwarz's lemmas one faces a number of difficulties caused by effects of positive curvature as well as several phenomena which are not present at all in the euclidean and hyperbolic situation, and hence a different approach and different tools are required. This paper addresses these issues.

In order to state our results, we first need to recall some basic concepts from spherical geometry. For more details the reader might consult Section \ref{sectionsph} and also \cite{mindabloch} and \cite{app}, for instance. We equip the Riemann sphere $\CC=\mathbb{C} \cup \{\infty\}$ with the spherical metric
$$ \lambda_{\CC}(z) \, |dz|=\frac{|dz|}{1+|z|^2} \, ,$$
the canonical conformal Riemannian metric on $\CC$ with constant Gaussian curvature $+4$. For two points $a,b \in \CC$ which are not antipodal  the  unique spherical geodesic joining $a$ and $b$ is the   smaller arc of the great circle through $a$ and $b$. The meromorphic spherical isometries form the group of rotations of $\CC$ which is explicitly given by
$$\Rot(\CC) := \left\{ e^{ i \theta} \frac{z-a }{1+z \overline{a}} :  \, a \in \mathbb{C}, \, \theta \in \mathbb{R} \right\} \cup \left\{ \frac{e^{i \theta}}{z} : \, \theta \in \mathbb{R} \right\}. $$
For a meromorphic function $f : \D \to \CC$ the spherical derivative
$$ f^{\sharp}(z):=\frac{|f'(z)|}{1+|f(z)|^2}$$ is invariant under postcomposition with  any $T \in \Rot(\CC)$, that is, $(T \circ f)^{\sharp}(z)=f^{\sharp}(z)$.

A domain $\Omega$ on the Riemann sphere $\CC$ is called \textit{spherically convex} if for any two points $a,b \in \Omega$ that are not antipodal the spherical geodesic joining $a$ and $b$  lies entirely in $\Omega$. A meromorphic univalent map $f: \D \to \CC$ is called \textit{spherically convex} if $f(\D)$ is a spherically convex domain in $\CC$.

Our first result provides a sharp upper bound for the spherical area
$$ \ArS f(r \D)=\iint \limits_{r \D} f^{\sharp}(z)^2 \, dA(z) $$
of the image $f(r \D)$ of any spherically convex function $f: \D \to \CC$ in terms
of 
$$ \ArS (r\D):=\iint \limits_{r \D} \frac{dA(z)}{\left(1+|z|^2\right)^2}=\frac{\pi r^2}{1+r^2} \, , $$
the spherical area of the disk $r \D$.

\begin{theorem}[Area Schwarz's Lemma for spherically convex functions] \label{thm:schwarzarea}
  Let $f : \D \to \CC$ be spherically convex. Then
  $$ \ArS f(r \D) \le \ArS(r\D)  \quad \text{ for every } 0<r<1 \, .$$
  Moreover, equality holds for some $0<r<1$ if and only if $f$ is a spherical isometry.
  \end{theorem}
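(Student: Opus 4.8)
The plan is to combine the spherical isoperimetric inequality with a sharp bound on the spherical length of the image curve, the latter being the genuinely new ingredient. Throughout write $A(r)=\ArS f(r\D)$ and $L(r)=\LenS f(r\T)$, and record the elementary formulas $A(r)=\int_0^r\int_0^{2\pi} f^{\sharp}(\rho e^{it})^2\,\rho\,dt\,d\rho$ and $L(r)=r\int_0^{2\pi} f^{\sharp}(re^{it})\,dt$, so that $A'(r)=r\int_0^{2\pi} f^{\sharp}(re^{it})^2\,dt$. For a spherical isometry one has $f^{\sharp}(z)=(1+|z|^2)^{-1}$, which produces the comparison quantities $\ArS(r\D)=\pi r^2/(1+r^2)$ and $L_0(r):=\LenS(r\T)=2\pi r/(1+r^2)$. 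First I would note that, since $f$ is spherically convex, each subdisk image $f(r\D)$ is again spherically convex (this should be available from the preliminary material), so its boundary has nonnegative geodesic curvature; the Gauss--Bonnet formula $\int_{f(r\T)}\kappa_g\,ds+4A(r)=2\pi$ then yields the a priori bound $A(r)\le \pi/2$, which will serve only to select the correct branch below.

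The two key ingredients are: (i) the spherical isoperimetric inequality $L(r)^2\ge 4A(r)\left(\pi-A(r)\right)$, valid on the sphere of curvature $+4$ (whose total area is $\pi$), with equality exactly for geodesic disks; and (ii) the length bound $L(r)\le L_0(r)$. Granting these, the theorem drops out of a short computation. Combining (i) and (ii) gives $A(r)\left(\pi-A(r)\right)\le \pi^2 r^2/(1+r^2)^2$, that is, $A(r)^2-\pi A(r)+\pi^2 r^2/(1+r^2)^2\ge 0$. The two roots of the corresponding quadratic are precisely $\pi r^2/(1+r^2)=\ArS(r\D)$ and $\pi/(1+r^2)$. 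Since $A(r)\le \pi/2<\pi/(1+r^2)$ for $r<1$, the value $A(r)$ cannot fall in the upper branch, and therefore $A(r)\le \ArS(r\D)$, as claimed.

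The hard part will be ingredient (ii), the sharp length estimate $L(r)\le L_0(r)$, which I would isolate and prove separately as a Length Schwarz Lemma. This is where spherical convexity must be used in an essential, non--Gauss--Bonnet way: for a general meromorphic $f$ the spherical derivative, and hence $L(r)$, can be arbitrarily large, and mere integration of the geodesic--curvature density $\RE\!\left(1+\frac{zf''}{f'}\right)-\frac{2\RE\!\left(\overline{f}\,zf'\right)}{1+|f|^2}$ over the circle $|z|=r$ only recovers the weaker bound $A(r)\le\pi/2$. I would therefore attack (ii) through the analytic characterization of spherical convexity (nonnegativity of that density) together with a normalizing post--composition by a rotation in $\Rot(\CC)$ placing $f(\D)$ in a hemisphere, reducing it to a Schwarz--Pick--type estimate for the spherical derivative along circles. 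Finally, for the equality statement I would argue as follows: if $A(r_0)=\ArS(r_0\D)$ for some $r_0\in(0,1)$, the inequality chain collapses, so isoperimetric equality forces $f(r_0\D)$ to be a geodesic disk, while $D(r):=\ArS(r\D)-A(r)\ge 0$ has an interior zero at $r_0$ and hence $D'(r_0)=0$, i.e.\ $A'(r_0)=\tfrac{d}{dr}\ArS(r\D)\big|_{r_0}$. Together with $L(r_0)=L_0(r_0)$ this is exactly the equality case of the Cauchy--Schwarz inequality $\left(\int_0^{2\pi} f^{\sharp}\,dt\right)^2\le 2\pi\int_0^{2\pi}(f^{\sharp})^2\,dt$ on $|z|=r_0$, forcing $f^{\sharp}$ to be constant on that circle. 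Writing $f=S\circ g$ with $S\in\Rot(\CC)$ and $g$ a conformal automorphism of $r_0\D$ (available since $f(r_0\D)$ is a geodesic disk and $f^{\sharp}$ is rotation--invariant), the constancy of $g^{\sharp}=f^{\sharp}$ on $r_0\T$ rules out any nonzero translation parameter, so $g$ is a rotation and $f$ is a spherical isometry; the converse is immediate.
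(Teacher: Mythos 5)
Your overall architecture---the spherical isoperimetric inequality combined with a length upper bound, then solving the resulting quadratic inequality $A(r)^2-\pi A(r)+\pi^2r^2/(1+r^2)^2\ge 0$ and selecting the lower root via the a priori bound $\ArS f(r\D)\le \pi/2$---is internally consistent. The problem is that everything hinges on your ingredient (ii), the estimate $\LeS f(r\T)\le \LeS(r\T)=2\pi r/(1+r^2)$, which you do not prove and which is not a result you can cite. This estimate is at least as strong as the theorem itself: by your own argument it implies the area bound, while the area bound does not obviously imply it, so you have reduced the statement to an unproven claim of at least equal depth. Moreover, the strategy you sketch for (ii)---rotating $f(\D)$ into a hemisphere and invoking ``a Schwarz--Pick--type estimate for the spherical derivative along circles''---cannot deliver it: the available pointwise estimate for spherically convex maps is $(1-|z|^2)f^{\sharp}(z)\le 1$, which upon integration over $r\T$ yields only $\LeS f(r\T)\le 2\pi r/(1-r^2)$ (essentially Remark \ref{rem:schwarzlength}), with the fatal $1-r^2$ instead of $1+r^2$ in the denominator. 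What you would need is the integrated estimate $\tfrac{1}{2\pi}\int_0^{2\pi}(1+r^2)f^{\sharp}(re^{it})\,dt\le 1$, and no pointwise bound of the form $(1+|z|^2)f^{\sharp}(z)\le 1$ is true (it already fails for disk automorphisms at $z=-a$).

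The paper sidesteps this entirely: it proves the area bound directly from the Gauss--Bonnet identity $\int_0^{2\pi}h_f(re^{it})\,dt=2\pi-4\ArS f(r\D)$ together with the sharp integrated lower bound $\tfrac{1}{2\pi}\int_0^{2\pi}h_f(re^{it})\,dt\ge (1-r^2)/(1+r^2)$ of Theorem \ref{hintbounds}, which is obtained by a Carath\'eodory-class argument in the manner of Mej{\'i}a and Pommerenke. Your remark that ``mere integration of the geodesic-curvature density only recovers $A(r)\le\pi/2$'' is precisely what led you away from this route: integrating $h_f\ge 0$ does give only $\pi/2$, but the sharper integrated lower bound gives exactly the claimed inequality, equality case included. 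A secondary, smaller issue: even granting (ii), your equality analysis only yields $f(z)=S(\eta z)$ for some $S\in\Rot(\CC)$ and some $\eta$; you must still exclude $|\eta|\ne 1$, e.g.\ by observing that $\LeS f(r_0\T)=\LeS(r_0\T)$ forces $|\eta|=1$ or $|\eta|=1/r_0^2$, the latter being impossible since the image would then fail to be spherically convex.
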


Theorem \ref{thm:schwarzarea} raises the problem whether there exists a corresponding lower bound for the ratio
  \begin{equation}\label{spharea}
\AreS(r) : = \frac{\ArS f(r \D)}{ \ArS (r \D)}, \quad r \in (0,1) \, . 
\end{equation}
Note that (\ref{spharea}) is the spherical analog of the euclidean quantity (\ref{area}).  Since
$$ \lim \limits_{r \to 0+} \AreS(r)=f^{\sharp}(0)^2 \, , $$
a lower bound for $\AreS(r)$  would follow provided one could prove that $\AreS(r)$ is increasing as a function of $r$.

 \begin{theorem}\label{thm:monotarea}
Let $f : \D \to \CC$ be spherically convex. Then $\AreS(r)$ is a strictly increasing function of $r \in (0,1)$, unless $f$ is a spherical isometry in which case $\AreS(r) \equiv 1$.
\end{theorem}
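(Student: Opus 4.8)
The plan is to prove that $\AreS'(r) > 0$ for all $r \in (0,1)$ when $f$ is not a spherical isometry. After a routine differentiation this reduces to a single pointwise inequality, which I would then close by combining three ingredients: a Cauchy--Schwarz estimate, the spherical isoperimetric inequality, and the area bound already established in Theorem \ref{thm:schwarzarea}. The appealing feature is that these fit together with no slack: the equality case of the isoperimetric inequality (spherical disks) matches exactly the equality case of the area Schwarz lemma.

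First I would write $\AreS(r) = A(r)/D(r)$ with $A(r) := \ArS f(r\D)$ and $D(r) := \ArS(r\D) = \pi r^2/(1+r^2)$. Passing to polar coordinates gives $A(r) = \int_0^r s\int_0^{2\pi} f^{\sharp}(se^{it})^2\,dt\,ds$, so that $A'(r) = r\int_0^{2\pi} f^{\sharp}(re^{it})^2\,dt$, while $D'(r) = 2\pi r/(1+r^2)^2$. Hence the sign of $\AreS'(r)$ is the sign of $A'(r)D(r) - A(r)D'(r)$, and after factoring out the positive quantity $\pi r/(1+r^2)$ the assertion $\AreS'(r)>0$ becomes
$$ r^2 \int_0^{2\pi} f^{\sharp}(re^{it})^2\,dt > \frac{2A(r)}{1+r^2}. $$

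To bound the left-hand side from below I would first apply the Cauchy--Schwarz inequality, which gives $r^2\int_0^{2\pi} f^{\sharp}(re^{it})^2\,dt \ge \big(\LenS f(r\T)\big)^2/(2\pi)$, where $\LenS f(r\T) = r\int_0^{2\pi} f^{\sharp}(re^{it})\,dt$ is the spherical length of the boundary curve. Next I would invoke the spherical isoperimetric inequality in the normalization of constant curvature $+4$, namely $\big(\LenS \partial\Omega\big)^2 \ge 4\,\ArS(\Omega)\big(\pi - \ArS(\Omega)\big)$ for a Jordan domain $\Omega$, with equality precisely for spherical disks; applied to $\Omega = f(r\D)$, which is the univalent (hence Jordan) image of $r\D$ with analytic boundary $f(r\T)$, this yields $\big(\LenS f(r\T)\big)^2 \ge 4A(r)\big(\pi - A(r)\big)$. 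Chaining the two estimates reduces the target inequality to $(\pi - A(r))/\pi \ge 1/(1+r^2)$, that is, to $A(r) \le \pi r^2/(1+r^2) = \ArS(r\D)$, which is exactly Theorem \ref{thm:schwarzarea}.

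Finally, for strictness I would use the equality characterization in Theorem \ref{thm:schwarzarea}: if $f$ is not a spherical isometry then $A(r) < \ArS(r\D)$ for every $r \in (0,1)$, which makes the last step strict (note $A(r) > 0$ since $f$ is nonconstant), whence $\AreS'(r) > 0$ throughout $(0,1)$; if $f$ is an isometry then $A(r) \equiv D(r)$ and $\AreS \equiv 1$. I do not expect the chain of inequalities itself to be the difficulty, since it fits together tightly. The main obstacle I anticipate lies in the supporting analytic points: justifying the differentiation under the integral sign for $A(r)$, and, above all, pinning down the spherical isoperimetric inequality in the curvature $+4$ normalization together with its equality case, and verifying that it applies to the Jordan domain $f(r\D)$. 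Once these are secured, the positivity of $\AreS'$ follows immediately from Theorem \ref{thm:schwarzarea}.
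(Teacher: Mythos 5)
Your proposal is correct and follows essentially the same route as the paper: Cauchy--Schwarz applied to $\frac{\partial}{\partial r}\ArS f(r\D)$, then the spherical isoperimetric inequality $\LeS(\partial D)^2 \ge 4\pi\ArS(D)-4\ArS(D)^2$, and finally Theorem \ref{thm:schwarzarea} to close the resulting differential inequality, with strictness obtained from the equality case of that theorem. The only difference is cosmetic (quotient rule versus the paper's direct expansion of $\AreS'$), and the isoperimetric inequality in the curvature $+4$ normalization is exactly the form the paper quotes from Osserman, so the step you flagged as a potential obstacle is already secured.
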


Theorem \ref{thm:monotarea} is a spherical analog of the previously known monotonicity results for
euclidean and hyperbolic area (\cite{aulask,5,kourou2}) mentioned at the beginning.

\begin{corollary}\label{lowerboundarea}
  Let $f : \D \to \CC$ be a spherically convex function. Then 
  \begin{equation}\label{lowerboundareaeq:1}
    \ArS f(r \D)  \geq \ArS (r\D)
  f^{\sharp}(0)^2   \quad \text{ for every } 0<r<1 \, .
\end{equation}
  Moreover, equality holds in \eqref{lowerboundareaeq:1} for some $0<r<1$ if and only if $f $ is a spherical isometry.  
\end{corollary}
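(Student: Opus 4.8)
The plan is to deduce this corollary directly from the monotonicity established in Theorem \ref{thm:monotarea}, combined with the limiting identity $\lim_{r \to 0+}\AreS(r) = f^{\sharp}(0)^2$ recorded just before that theorem. I would split the argument into two cases according to whether or not $f$ is a spherical isometry, since the inequality \eqref{lowerboundareaeq:1} and its sharpness statement track precisely this dichotomy.

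First I would treat the case where $f$ is \emph{not} a spherical isometry. By Theorem \ref{thm:monotarea}, the ratio $\AreS$ is then strictly increasing on $(0,1)$. Fixing $r \in (0,1)$ and choosing any $s_0$ with $0 < s_0 < r$, strict monotonicity gives $\AreS(s_0) < \AreS(r)$, while letting $s \to 0+$ in the monotonicity relation shows $f^{\sharp}(0)^2 = \lim_{s \to 0+}\AreS(s) \le \AreS(s_0)$. Combining these yields the chain $f^{\sharp}(0)^2 \le \AreS(s_0) < \AreS(r)$, hence the \emph{strict} inequality $\AreS(r) > f^{\sharp}(0)^2$. Recalling $\AreS(r) = \ArS f(r\D)/\ArS(r\D)$ and multiplying through by $\ArS(r\D) > 0$ gives $\ArS f(r\D) > \ArS(r\D)\, f^{\sharp}(0)^2$ for every $r \in (0,1)$.

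Next I would treat the case where $f$ is a spherical isometry, i.e. $f \in \Rot(\CC)$. Two facts combine here. On the one hand, Theorem \ref{thm:monotarea} gives $\AreS(r) \equiv 1$. On the other hand, the defining isometry relation $\lambda_{\CC}(f(z))\,|f'(z)| = \lambda_{\CC}(z)$ reads exactly $f^{\sharp}(z) = 1/(1+|z|^2)$, so that $f^{\sharp}(0) = 1$ and therefore $f^{\sharp}(0)^2 = 1 = \AreS(r)$ for every $r$. This translates into equality $\ArS f(r\D) = \ArS(r\D)\, f^{\sharp}(0)^2$ throughout $(0,1)$.

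Putting the two cases together establishes both \eqref{lowerboundareaeq:1} and its equality clause: equality at some $r \in (0,1)$ means $\AreS(r) = f^{\sharp}(0)^2$, which the strict inequality of the first case rules out unless $f$ is a spherical isometry; conversely, a spherical isometry forces equality for all $r$. Since all the genuine analytic content has already been absorbed into Theorem \ref{thm:monotarea}, I do not expect any real obstacle here. The only point deserving a little care is the passage $s \to 0+$: although $0 \notin (0,1)$, the \emph{strictness} of the monotonicity is exactly what is needed to produce a strict gap between the boundary value $f^{\sharp}(0)^2$ and $\AreS(r)$ at every interior point $r$, and thus to pin down the equality case precisely.
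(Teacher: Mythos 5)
Your proof is correct and follows exactly the route the paper intends: the corollary is deduced from the strict monotonicity of $\AreS$ in Theorem \ref{thm:monotarea} together with the limit $\lim_{r\to 0+}\AreS(r)=f^{\sharp}(0)^2$ recorded just before that theorem, with the strictness of the monotonicity pinning down the equality case. Your handling of the isometry case via $f^{\sharp}(z)=1/(1+|z|^2)$ is also the standard observation the paper relies on.
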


\begin{remark}[Theorem \ref{thm:monotarea} vs.~Theorem \ref{thm:schwarzarea}]
Clearly, $\ArS f(r \D) \le \pi/2$ for any spherically convex function $f: \D \to \CC$, so Theorem \ref{thm:monotarea} easily implies 
$$ \frac{\ArS f(r \D)}{\ArS (r\D)}=\AreS(r) \le \limsup  \limits_{\rho \to 1-}  \AreS(\rho)  \le \lim \limits_{\rho \to 1-} \frac{\pi/2}{\ArS (\rho \D)}=\lim \limits_{\rho \to 1-} \frac{1+\rho^2}{2 \rho}=1 $$
for every $0<r<1$.
In this sense,  Theorem \ref{thm:schwarzarea} appears as an easy corollary of Theorem \ref{thm:monotarea}. However, the proof of Theorem \ref{thm:monotarea} we give below depends in an  essential way on Theorem \ref{thm:schwarzarea}, so the apparently stronger statement of Theorem \ref{thm:monotarea} is in fact \textit{equivalent} to Theorem \ref{thm:schwarzarea}. 
\end{remark}

In passing, we note that the \textit{proof} of Theorem \ref{thm:monotarea} leads to another sharp lower bound for the spherical area $\ArS f(r \D)$ which is more precise than the one provided by the sharp inequality (\ref{lowerboundareaeq:1}), but  geometrically less pleasing:

\begin{corollary} \label{lowerboundarea2}
  Let $f : \D \to \CC$ be a spherically convex function.  Then 
  $$\ArS f(r \D) \geq\frac{\pi r^2}{1+r^2 f^{\sharp}(0)^2}
  f^{\sharp}(0)^2 \quad \text{ for every } 0<r<1 \, . $$
Moreover, equality holds for any $0<r<1$ if $f$ has the form $f(z)=T (\eta z)$ with $T \in \Rot(\CC)$ and $0<|\eta|\le 1$. 
\end{corollary}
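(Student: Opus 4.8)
The plan is to integrate the pointwise differential inequality that underlies the proof of Theorem~\ref{thm:monotarea}, but \emph{without} first weakening it by means of Theorem~\ref{thm:schwarzarea}; this is precisely what separates the present bound from the one in Corollary~\ref{lowerboundarea}. Throughout write $\varphi(r):=\ArS f(r\D)$ and $a:=f^{\sharp}(0)^2$, and note that the asserted inequality is equivalent, after clearing denominators, to the statement that the auxiliary function
\begin{equation}\label{eq:aux}
  G(r):=r^2\left(\frac{1}{\varphi(r)}-\frac{1}{\pi}\right)
\end{equation}
satisfies $G(r)\le \tfrac{1}{\pi a}$ for all $r\in(0,1)$.

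First I would record the differential inequality
\begin{equation}\label{eq:diffineq}
  \varphi'(r)\ \ge\ \frac{2\varphi(r)}{r}\left(1-\frac{\varphi(r)}{\pi}\right),\qquad 0<r<1,
\end{equation}
which I expect to be the analytic heart of the proof of Theorem~\ref{thm:monotarea}. It is obtained by combining three ingredients: the polar-coordinate identity $\varphi'(r)=r\int_0^{2\pi} f^{\sharp}(re^{it})^2\,dt$; the Cauchy--Schwarz estimate $\int_0^{2\pi} f^{\sharp}(re^{it})^2\,dt\ge \tfrac{1}{2\pi}\big(\int_0^{2\pi} f^{\sharp}(re^{it})\,dt\big)^2=\LenS f(r\T)^2/(2\pi r^2)$; and the spherical isoperimetric inequality $\LenS f(r\T)^2\ge 4\pi\varphi(r)-4\varphi(r)^2$ applied to the spherically convex, hence simply connected and Jordan, domain $f(r\D)$. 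Substituting the latter two into the former yields \eqref{eq:diffineq}. A direct computation shows that \eqref{eq:diffineq} is exactly equivalent to $G'(r)\le 0$, i.e.\ to $G$ being non-increasing on $(0,1)$.

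Next I would identify the boundary value at the origin. From $\varphi(r)=\pi a\,r^2+o(r^2)$ as $r\to 0+$ one gets $\lim_{r\to 0+}G(r)=\tfrac{1}{\pi a}$. Since $G$ is non-increasing, $G(r)\le \tfrac{1}{\pi a}$ for every $r\in(0,1)$, which is the claimed inequality after rearrangement. (One checks in passing that \eqref{eq:diffineq} together with the bound $\varphi(r)\le \pi r^2/(1+r^2)$ from Theorem~\ref{thm:schwarzarea} implies the weaker monotonicity of $\AreS$, consistent with the Remark.)

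Finally, for the equality statement it suffices to verify the sufficiency asserted in the corollary. If $f(z)=T(\eta z)$ with $T\in\Rot(\CC)$ and $0<|\eta|\le 1$, then by the rotation invariance of the spherical derivative $f^{\sharp}(z)=|\eta|/(1+|\eta|^2|z|^2)$, so a one-line integration gives $\varphi(r)=\pi|\eta|^2 r^2/(1+|\eta|^2 r^2)$ while $f^{\sharp}(0)^2=|\eta|^2$; these two expressions coincide with the two sides of the inequality, so equality holds for every $r$. I expect the main obstacle to be the rigorous justification of the two inequalities feeding \eqref{eq:diffineq}---in particular pinning down the correct normalization $L^2\ge 4\pi A-4A^2$ of the spherical isoperimetric inequality in the curvature-$+4$ metric and verifying its applicability to $f(r\D)$---while the integration of \eqref{eq:aux} and the equality verification are routine.
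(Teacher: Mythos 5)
Your proposal is correct and follows essentially the same route as the paper: both derive the differential inequality \eqref{isoperimboundeq:1} for $\varphi(r)=\ArS f(r\D)$ from Cauchy--Schwarz together with the spherical isoperimetric inequality \eqref{isoperimetricineq}, integrate it from $0$ to $r$ using the initial behaviour $\varphi(r)\sim \pi f^{\sharp}(0)^2 r^2$, and verify equality for $f(z)=T(\eta z)$ by direct computation. The only (minor, and in fact slightly cleaner) difference is that you linearize the Bernoulli-type inequality via the substitution $G(r)=r^2\bigl(1/\varphi(r)-1/\pi\bigr)$ and show $G$ is non-increasing, whereas the paper divides by $x(r)\left(1-x(r)\right)$ with $x(r)=\AreS(r)$ and must therefore first dispose of the case $f\in\Rot(\CC)$ to ensure $x(r)<1$.
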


Our next results deal with the spherical length
$$ \LeS f(r \T):= \int \limits_{r \T} f^{\sharp}(z) \, |dz|$$
of the image $f(r \T)$ of the circle $r \T$ under a meromorphic map $f : \D \to \CC$. We 
denote by
$$ \LeS (r\T):=\int \limits_{r \T} \frac{|dz|}{1+|z|^2}=\frac{2 \pi r}{1+r^2} $$
the spherical length of the circle $r \T$.

\begin{theorem}[Length Schwarz's lemma for spherically convex functions] \label{lowerboundlength1}
Let $f:\D \to \CC$ be a spherically convex function. Then 
$$ \LeS (f(r \T)) \ge  \LeS (r \T) \, f^{\sharp}(0) \quad \text{ for every } 0<r<1 \, .$$
   Moreover, equality holds for some $0<r<1$ if and only if $f$ is a spherical isometry. 
   \end{theorem}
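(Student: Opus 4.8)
The plan is to derive the length estimate from the two area results already available, combined with the sharp spherical isoperimetric inequality. Since a spherically convex $f$ is in particular univalent, $f(r\T)$ is a rectifiable Jordan curve bounding the Jordan domain $f(r\D)$, so the isoperimetric inequality on $\CC$ (total spherical area $\pi$, constant curvature $+4$) applies and gives
$$ \LeS(f(r\T))^2 \ge 4\,\ArS f(r\D)\bigl(\pi - \ArS f(r\D)\bigr), $$
with equality precisely when $f(r\D)$ is a spherical cap. Writing $A := \ArS f(r\D)$ and $h(A) := 4A(\pi - A)$, the right-hand side is $h(A)$, and $h$ is strictly increasing on $[0,\pi/2]$. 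Thus the task reduces to bounding $A$ from below while keeping $A$ inside the range where $h$ is increasing.

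First I would record that $A \le \pi/2$. By Theorem \ref{thm:schwarzarea} we have $A \le \ArS(r\D) = \tfrac{\pi r^2}{1+r^2} < \tfrac{\pi}{2}$ for every $r \in (0,1)$, so $A$ indeed lies in the increasing range of $h$. Next, Corollary \ref{lowerboundarea} supplies the matching lower bound $A \ge \ArS(r\D)\,f^{\sharp}(0)^2$. Since $\ArS(r\D) f^{\sharp}(0)^2 \le A \le \pi/2$, monotonicity of $h$ yields
$$ \LeS(f(r\T))^2 \ge h\bigl(\ArS(r\D) f^{\sharp}(0)^2\bigr) = 4\,\ArS(r\D) f^{\sharp}(0)^2\bigl(\pi - \ArS(r\D) f^{\sharp}(0)^2\bigr). $$

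It then remains to check that this last quantity is at least $\LeS(r\T)^2 f^{\sharp}(0)^2 = \tfrac{4\pi^2 r^2}{(1+r^2)^2} f^{\sharp}(0)^2$. Substituting $\ArS(r\D) = \tfrac{\pi r^2}{1+r^2}$ and cancelling the common positive factors, this inequality collapses to the single scalar condition $f^{\sharp}(0) \le 1$. This is exactly where spherical convexity enters decisively, beyond the area estimates: a proper spherically convex domain is contained in a closed hemisphere, so after postcomposing $f$ with a suitable $T \in \Rot(\CC)$—which leaves $f^{\sharp}$ unchanged—we may assume $f(\D) \subseteq \{|w|<1\}$, i.e. $f$ is a holomorphic self-map of $\D$. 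The Schwarz–Pick lemma then gives $|f'(0)| \le 1 - |f(0)|^2$, whence $f^{\sharp}(0) = \tfrac{|f'(0)|}{1+|f(0)|^2} \le \tfrac{1-|f(0)|^2}{1+|f(0)|^2} \le 1$. Taking square roots throughout produces the asserted bound $\LeS(f(r\T)) \ge \LeS(r\T)\, f^{\sharp}(0)$.

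For the equality statement, if equality holds for some $r \in (0,1)$ then every inequality in the chain must be an equality; in particular equality holds in the application of Corollary \ref{lowerboundarea}, i.e. $A = \ArS(r\D) f^{\sharp}(0)^2$, and the equality clause of that corollary forces $f$ to be a spherical isometry. Conversely, any $T \in \Rot(\CC)$ satisfies $T^{\sharp}(z) = \tfrac{1}{1+|z|^2}$, so $f^{\sharp}(0)=1$ and $\LeS(f(r\T)) = \LeS(r\T) = \LeS(r\T) f^{\sharp}(0)$, giving equality. I expect the main obstacle to be not the algebra but pinning down the two nontrivial inputs in clean, citable form: the \emph{sharp} spherical isoperimetric inequality together with its cap-equality case, and the Schwarz-type bound $f^{\sharp}(0) \le 1$ for spherically convex $f$. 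Once these are in place, the reduction to $f^{\sharp}(0)\le 1$ and the equality analysis are routine.
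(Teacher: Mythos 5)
Your argument is correct, but it takes a genuinely different route from the paper's own proof of this theorem. The paper proves Theorem \ref{lowerboundlength1} directly from the total-curvature machinery: it writes $(1+r^2)\LeS f(r\T)/(2\pi)$ as an integral mean of $\exp\left(\log\left[(1+r^2)f^{\sharp}(re^{it})\right]\right)$, applies Jensen's inequality, and then invokes Lemma \ref{lem} (the integral means of $\log\left[(1+r^2)f^{\sharp}(re^{it})\right]$ increase in $r$, a consequence of Theorem \ref{hintbounds}); the equality case falls out of the equality cases of Jensen and of Theorem \ref{hintbounds}. You instead route everything through spherical area: the isoperimetric inequality, the upper bound of Theorem \ref{thm:schwarzarea} (correctly used to keep $A$ in the range where $4A(\pi-A)$ increases), the lower bound of Corollary \ref{lowerboundarea}, and the normalization $f^{\sharp}(0)\le 1$ --- which, incidentally, is already available as Lemma \ref{sphderupper}, though your hemisphere-plus-Schwarz--Pick derivation is also fine. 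Your reduction of the final comparison to $f^{\sharp}(0)\le 1$ checks out, and the equality analysis via strict monotonicity of $A\mapsto 4A(\pi-A)$ on $[0,\pi/2)$ together with the equality clause of Corollary \ref{lowerboundarea} is sound. In fact the authors record essentially your argument in the Remark following Theorem \ref{lowerboundlength2}, noting that it yields a bound slightly sharper than the one stated in Theorem \ref{lowerboundlength1}. The trade-off is one of logical economy: your proof needs Corollary \ref{lowerboundarea}, hence Theorem \ref{thm:monotarea}, which the paper only proves in Section \ref{monotonarealength} (there is no circularity, since that proof never uses the length lemma, but the order of exposition would have to be rearranged), whereas the paper's proof is self-contained at the point where it appears, resting only on Theorem \ref{hintbounds} and Jensen's inequality.
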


   \begin{remark}\label{rem:schwarzlength}
An \textit{upper} bound for $\LeS f(r \T)$ for  spherically convex functions $f : \D \to \CC$ is 
 $$  \LeS f(r \T) \le \frac{2\pi r}{1-r^2} f^{\sharp}(0)  \quad \text{ for every } 0<r<1 \, .$$ 
\end{remark}

Similar to Corollary \ref{lowerboundarea2} there is also a more precise, but geometrically less natural lower bound for spherical length, which follows from Corollary \ref{lowerboundarea2} in conjunction with the isoperimetric inequality.

\begin{theorem}\label{lowerboundlength2}
Let $f:\D \to \CC$ be  spherically convex. Then 
$$ \LeS (f(r \T)) \ge  \frac{2\pi r f^{\sharp}(0)}{1+r^2 f^{\sharp }(0)^2}  \quad \text{ for every } 0<r<1 \, .$$
   Moreover, equality holds for any $0<r<1$ if $f$ has the form $f(z)=T( \eta z)$ with $T \in \Rot(\CC)$ and $0<|\eta|\le 1$. 
   \end{theorem}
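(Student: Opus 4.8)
The plan is to combine the sharp lower area bound of Corollary \ref{lowerboundarea2} with the spherical isoperimetric inequality. Since $f$ is spherically convex it is in particular univalent, so $f(r\D)$ is a Jordan domain whose boundary is the Jordan curve $f(r\T)$. On the Riemann sphere $\CC$ equipped with the spherical metric of constant curvature $+4$, whose total spherical area equals $\pi$, the (classical) isoperimetric inequality asserts that every such domain $\Omega$ with spherical area $A=\ArS \Omega$ and spherical boundary length $L$ satisfies
\begin{equation}\label{eq:iso}
L^2 \ge 4\pi A - 4 A^2 = 4 A (\pi - A),
\end{equation}
with equality precisely for geodesic disks (spherical caps). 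First I would apply \eqref{eq:iso} with the choices $A=\ArS f(r\D)$ and $L=\LeS f(r\T)$.

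Next I would feed the lower bound of Corollary \ref{lowerboundarea2} into \eqref{eq:iso}. Writing $A_0 := \dfrac{\pi r^2 f^{\sharp}(0)^2}{1 + r^2 f^{\sharp}(0)^2}$ for that bound, Corollary \ref{lowerboundarea2} gives $A \ge A_0$, while the Remark following Theorem \ref{thm:monotarea} gives $A \le \pi/2$. Hence $A_0 \le A \le \pi/2$, and on $[0,\pi/2]$ the quadratic $g(A):=4A(\pi-A)$ is strictly increasing, so $4A(\pi-A)\ge 4A_0(\pi-A_0)$. A direct computation, using $\pi - A_0 = \pi/(1 + r^2 f^{\sharp}(0)^2)$, yields
$$4 A_0 (\pi - A_0) = \frac{4\pi^2 r^2 f^{\sharp}(0)^2}{\left(1 + r^2 f^{\sharp}(0)^2\right)^2},$$
and taking square roots in the chain $L^2\ge g(A)\ge g(A_0)$ gives exactly $\LeS f(r\T) \ge \dfrac{2\pi r f^{\sharp}(0)}{1 + r^2 f^{\sharp}(0)^2}$, as claimed.

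Finally, for the equality statement I would check that when $f(z)=T(\eta z)$ with $T\in\Rot(\CC)$ and $0<|\eta|\le 1$, the image $f(r\D)=T\bigl(\{|w|<|\eta| r\}\bigr)$ is a geodesic disk: $\{|w|<|\eta| r\}$ is a spherical cap centered at the origin and $T$ is a spherical isometry. For such domains \eqref{eq:iso} holds with equality, and this form of $f$ also realizes equality in Corollary \ref{lowerboundarea2} (indeed $f^{\sharp}(0)=|\eta|$ and $\ArS f(r\D)=\pi(|\eta| r)^2/(1+(|\eta| r)^2)=A_0$). Thus every inequality in the chain becomes an equality and the stated bound is attained. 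The only genuinely delicate point is the appeal to the spherical isoperimetric inequality with the correct curvature normalization $+4$, together with the verification that the relevant areas stay below the hemisphere value $\pi/2$, so that the comparison function $g(A)=4A(\pi-A)$ is invoked only on the interval where it is monotone increasing.
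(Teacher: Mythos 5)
Your proposal is correct and follows essentially the same route as the paper: combine the spherical isoperimetric inequality $\LeS f(r\T)^2 \ge 4\pi \ArS f(r\D) - 4 \ArS f(r\D)^2$ with the lower area bound of Corollary \ref{lowerboundarea2}, using $\ArS f(r\D) \le \pi/2$ to justify that the quadratic is increasing on the relevant range. Your verification of the equality case for $f(z)=T(\eta z)$ via geodesic disks is in fact more explicit than the paper's, which simply refers back to the equality discussion in Corollary \ref{lowerboundarea2}.
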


Theorem \ref{thm:monotarea}   raises  the question whether the ratio 
\begin{equation}\label{sphlength}
\LenS(r) : = \frac{\LeS f(r \T)}{ \LeS (r \T)} 
\end{equation}
is monotonically increasing as a function of $r$.
 Note that (\ref{sphlength}) is the spherical analog of the quantity (\ref{length}). 
 While we cannot offer a full answer, we shall now show that 
such a monotonicity result does hold for spherically convex functions $f : \D \to \CC$ which are 
\textit{centrally normalized}:
$$f(z)= \alpha z+ a_3 z^3 + ..., \quad z \in \D, $$
where $$\alpha = \max_{z \in \D} \left(1-|z|^2\right) f^{\sharp}(z).$$
The important additional assumption is that $f''(0) = 0$.
The notion of \textit{central normalization} and the insight of its relevance in the study of spherically convex function is due to Mej{\'i}a and Pommerenke \cite{mejiapomsph} building on earlier work of Minda and Wright \cite{MW}, Chuaqui and Osgood \cite{CO} and Chuaqui, Osgood and Pommerenke \cite{COP}. According to \cite[Theorem 4]{mejiapomsph}, for any spherically convex function $f$ there is always a unit disk automorphism $\psi$ and  a rotation $T \in \Rot(\CC)$ such that $T \circ f \circ \psi$ is centrally normalized.

 \begin{theorem}\label{thm:monotlength}
  Let $f: \D \to \CC$ be a centrally normalized spherically convex function. Then $\LenS(r)$ is a strictly increasing function of $r \in (0,1)$, unless $f$ is a spherical isometry in which case $\LenS(r) \equiv 1$.
  \end{theorem}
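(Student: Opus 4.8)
The plan is to differentiate $\LenS$ directly and convert the problem into a single sharp integral inequality on each circle $|z|=r$, using spherical convexity through the Gauss--Bonnet formula together with the Area Schwarz Lemma (Theorem \ref{thm:schwarzarea}). Writing $z=re^{it}$ and
$$ m(r) := \frac{1}{2\pi}\int_0^{2\pi} f^{\sharp}(re^{it})\,dt, $$
one has $\LeS f(r\T)=2\pi r\,m(r)$ and $\LeS(r\T)=2\pi r/(1+r^2)$, so that $\LenS(r)=(1+r^2)\,m(r)$. Monotonicity of $\LenS$ is therefore equivalent to $m'(r)+\tfrac{2r}{1+r^2}m(r)\ge 0$, and the whole point is to obtain a sufficiently strong lower bound for the (generally negative) radial mean $m'(r)$; note that positive curvature makes $\log f^{\sharp}$ superharmonic, so the Euclidean subharmonicity argument of P\'olya--Szeg\H{o} is unavailable here.

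The key local quantity is the spherical convexity expression
$$ \Phi_f(z) := 1 + \RE\frac{zf''(z)}{f'(z)} - \frac{2\,\RE\!\big(z\,\overline{f(z)}\,f'(z)\big)}{1+|f(z)|^2}, $$
which equals $r\,f^{\sharp}$ times the spherical geodesic curvature of the image curve $f(r\T)$; spherical convexity of $f(\rho\D)$ for every $\rho<1$ is equivalent to $\Phi_f\ge 0$ on $\D$. A direct computation gives $\partial_r\log f^{\sharp}(re^{it})=\tfrac{1}{r}\big(\Phi_f(re^{it})-1\big)$, and substituting this into $\LenS'(r)=2r\,m(r)+(1+r^2)m'(r)$ yields the clean identity
$$ \LenS'(r) = \frac{1+r^2}{r}\left[\frac{1}{2\pi}\int_0^{2\pi} f^{\sharp}\,\Phi_f\,dt \;-\; \frac{1-r^2}{1+r^2}\cdot\frac{1}{2\pi}\int_0^{2\pi} f^{\sharp}\,dt\right]. $$
Thus it remains to prove the weighted inequality $\tfrac{1}{2\pi}\int_0^{2\pi} f^{\sharp}\Phi_f\,dt \ge \tfrac{1-r^2}{1+r^2}\, m(r)$.

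For the \emph{unweighted} average I would invoke the Gauss--Bonnet formula on the disk-type region $f(r\D)$ (Euler characteristic $1$, Gaussian curvature $+4$): since the total geodesic curvature equals $\int_0^{2\pi}\Phi_f\,dt$, one gets $\tfrac{1}{2\pi}\int_0^{2\pi}\Phi_f\,dt = 1-\tfrac{2}{\pi}\ArS f(r\D)$, and the Area Schwarz Lemma then gives
$$ \frac{1}{2\pi}\int_0^{2\pi}\Phi_f\,dt = 1 - \frac{2}{\pi}\ArS f(r\D)\ \ge\ 1 - \frac{2}{\pi}\ArS(r\D) = \frac{1-r^2}{1+r^2}, $$
with strict inequality unless $f$ is a spherical isometry. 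Consequently the desired weighted inequality would follow from Chebyshev's integral inequality, provided that along each circle $|z|=r$ the two functions $t\mapsto f^{\sharp}(re^{it})$ and $t\mapsto\Phi_f(re^{it})$ are \emph{similarly ordered}: then $\tfrac{1}{2\pi}\int f^{\sharp}\Phi_f\,dt \ge \big(\tfrac{1}{2\pi}\int\Phi_f\,dt\big)\,m(r)\ge\tfrac{1-r^2}{1+r^2}m(r)$.

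I expect this comonotonicity (equivalently, the nonnegativity of the covariance of $f^{\sharp}$ and $\Phi_f$ on $r\T$) to be the main obstacle, and it is precisely here that the hypothesis $f''(0)=0$ must enter: for a general spherically convex $f$ the two functions can be anti-aligned, which is exactly why Theorem \ref{thm:monotlength} is restricted to the centrally normalized case. I would establish the correlation estimate using the distortion theory of Mej\'ia and Pommerenke \cite{mejiapomsph}, exploiting that central normalization forces $f^{\sharp}$ and its radial logarithmic derivative $\Phi_f-1$ to attain their angular extrema compatibly; moreover the strictly positive area deficit $\ArS(r\D)-\ArS f(r\D)$ (present whenever $f$ is not an isometry) supplies additional slack, so that only a weakened, quantitative form of the correlation inequality is actually needed. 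For the equality statement: if $f$ is a spherical isometry then $f^{\sharp}=\lambda_{\CC}$ and $\LenS\equiv 1$; otherwise the strict area inequality renders the bracket in the identity for $\LenS'(r)$ strictly positive for every $r$, giving strict monotonicity.
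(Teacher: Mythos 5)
Your differentiation step is correct and coincides with the paper's: writing $\LenS(r)=(1+r^2)m(r)$ and using $r\,\partial_r\log f^{\sharp}(re^{it})=h_f(re^{it})-1$ (your $\Phi_f$ is the paper's $h_f$), one gets exactly
$$\LenS'(r)=\frac{1+r^2}{r}\left[\frac{1}{2\pi}\int_0^{2\pi}f^{\sharp}(re^{it})\,h_f(re^{it})\,dt-\frac{1-r^2}{1+r^2}\,m(r)\right].$$
The problem is what comes next. Your plan reduces the weighted inequality to the unweighted one (Gauss--Bonnet plus the Area Schwarz Lemma, i.e.\ Theorem \ref{hintbounds}) via Chebyshev's integral inequality, and that step requires $t\mapsto f^{\sharp}(re^{it})$ and $t\mapsto h_f(re^{it})$ to be similarly ordered on every circle. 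You correctly identify this comonotonicity as the main obstacle, but you do not prove it; you only say you ``would establish'' it from the distortion theory of Mej\'ia and Pommerenke. That is a genuine gap, and it is the entire content of the theorem: Chebyshev's inequality for periodic functions needs $(f^{\sharp}(re^{is})-f^{\sharp}(re^{it}))(h_f(re^{is})-h_f(re^{it}))\ge 0$ for all $s,t$, which is a strong global alignment condition with no evident reason to hold for a general centrally normalized spherically convex function, and no mechanism in your sketch produces it. The remark that the ``strictly positive area deficit supplies additional slack'' does not quantify anything and cannot substitute for the missing correlation estimate.

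The way central normalization actually enters is different and much simpler: Mej\'ia and Pommerenke proved the \emph{pointwise} bound $h_f(z)\ge\frac{1-|z|^2}{1+|z|^2}$ for centrally normalized spherically convex functions (inequality \eqref{lowerhf} in the paper), with equality only for spherical isometries. Substituting this into the bracketed integrand $f^{\sharp}\left(h_f-\frac{1-r^2}{1+r^2}\right)$ makes it nonnegative \emph{pointwise in $t$}, so $\LenS'(r)\ge 0$ immediately, with strictness unless $f$ is an isometry; no correlation between $f^{\sharp}$ and $h_f$ is needed. Your integrated bound $\frac{1}{2\pi}\int_0^{2\pi}h_f\,dt\ge\frac{1-r^2}{1+r^2}$ (which holds for \emph{all} spherically convex $f$, normalized or not) is too weak to control the weighted mean without the unproven comonotonicity. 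To repair your argument, replace the Chebyshev step by the pointwise estimate \eqref{lowerhf}.
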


Theorem \ref{thm:monotlength} is a spherical analog of the previously known monotonicity results for euclidean and hyperbolic length (\cite{polyaszego,kourou2}).

\medskip

  In addition to spherical length and spherical area another important geometric quantity in spherical geometry is the total spherical curvature
  $$ \int \limits_{\gamma} \kappa_s(w,\gamma) \lambda_{\CC}(w) \, |dw|$$
  of a curve $\gamma$, see Section \ref{sectionsph} and \cite{app}. Roughly speaking, total spherical curvature measures how much the curve $\gamma$ diverges from being a spherical geodesic. We consider 
the ratio
   $$\Phi_s(r) : = \frac{\displaystyle\int\limits_{f(r \T)}  \kappa_s (w, f(r \T))\, |dw|}{\displaystyle\int\limits_{r \T}  \kappa_s (z, r \T) \, |dz|}. $$
and  prove the following monotonicity property.

\begin{theorem}\label{thm:monotcurv}
Let $f : \D \to \CC$ be a centrally normalized spherically convex function. Then $\Phi_s(r)$ is a strictly increasing function of $r \in (0,1)$, unless  $f$ is a spherical isometry in which case $\Phi_s(r) \equiv 1$. 
\end{theorem}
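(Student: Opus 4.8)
The plan is to collapse the ratio $\Phi_s$ to a quantity depending only on spherical area by means of the Gauss--Bonnet formula, reduce the monotonicity to one differential inequality for the area function, and finally prove that inequality using the central normalization. Recall first that spherical convexity of $f$ is characterized by the pointwise condition
$$ h(z) := \RE\left(1+\frac{zf''(z)}{f'(z)}-\frac{2zf'(z)\overline{f(z)}}{1+|f(z)|^2}\right)\ge 0, \qquad z\in\D, $$
which is exactly the statement that the spherical geodesic curvature of each image curve $f(r\T)$ is nonnegative; in particular every subdomain $f(r\D)$ is again a spherically convex Jordan domain. Since the spherical metric has constant curvature $+4$, applying the Gauss--Bonnet formula to $f(r\D)$ and to $r\D$ gives
$$ \int_{f(r\T)}\kappa_s(w,f(r\T))\,|dw|=2\pi-4\,\ArS f(r\D),\qquad \int_{r\T}\kappa_s(z,r\T)\,|dz|=2\pi-4\,\ArS(r\D)=2\pi\,\frac{1-r^2}{1+r^2}. $$
Hence both the numerator and the denominator of $\Phi_s$ are governed entirely by spherical area, and $\Phi_s(r)=\frac{1+r^2}{1-r^2}\bigl(1-\tfrac{2}{\pi}\ArS f(r\D)\bigr)$.

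Next I would differentiate. Writing $A(r)=\ArS f(r\D)$, a logarithmic differentiation of the two factors shows that $\Phi_s$ is strictly increasing on $(0,1)$ precisely when
$$ (1-r^4)\,A'(r)<2r\,\bigl(\pi-2A(r)\bigr),\qquad 0<r<1. $$
Since $A'(r)=r\int_0^{2\pi}f^{\sharp}(re^{it})^2\,dt$ and $\pi-2A(r)=\tfrac12\int_0^{2\pi}h(re^{it})\,dt$, this is the integral inequality $(1-r^4)\int_0^{2\pi}f^{\sharp}(re^{it})^2\,dt<\int_0^{2\pi}h(re^{it})\,dt$. It is worth stressing that Theorem \ref{thm:monotarea} supplies only the \emph{lower} bound $A'(r)>2A(r)/\bigl(r(1+r^2)\bigr)$, whereas what is now needed is an \emph{upper} bound on the area growth $A'(r)$. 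This is exactly the point at which central normalization must enter: without it the displayed inequality can fail.

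I would establish the inequality through the sharp pointwise estimate
$$ h(z)\ge \bigl(1-|z|^4\bigr)\,f^{\sharp}(z)^2,\qquad z\in\D, $$
valid for centrally normalized spherically convex $f$, with equality throughout if and only if $f(z)=e^{i\theta}z$ is an isometry. Integrating over $r\T$ yields the required inequality, strictly unless $f$ is an isometry, and hence the theorem. At the origin the estimate reads $1\ge f^{\sharp}(0)^2$, the known bound $f^{\sharp}(0)\le 1$; for the cap maps $f(z)=T(\eta z)$ with $T\in\Rot(\CC)$ and $0<|\eta|\le 1$ it holds with equality precisely when $|\eta|=1$.

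The main obstacle is establishing this pointwise estimate. The difficulty is twofold: the convexity condition $h\ge 0$ controls only the radial logarithmic derivative of $f^{\sharp}$ (indeed $h=\rho\,\partial_\rho\log(\rho f^{\sharp})$ on $|z|=\rho$), while the estimate couples $h$ with the \emph{size} of $f^{\sharp}$ at the same point; and the two decoupled bounds available, namely $h\ge 0$ and the central-normalization bound $(1-|z|^2)f^{\sharp}(z)\le f^{\sharp}(0)\le 1$, are individually too weak, so the correlation between $h$ and $f^{\sharp}$ is essential. I expect to obtain the estimate from the sharp distortion theory for centrally normalized spherically convex functions of Mej\'ia and Pommerenke \cite{mejiapomsph}: after a rotation $T\in\Rot(\CC)$ normalizing $f$ at the given point, the inequality should reduce to a one-variable estimate in $|z|$ whose extremal configuration is the cap map $z\mapsto\eta z$. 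A direct computation confirms that the hypothesis $f''(0)=0$ genuinely cannot be removed: for the off-center cap maps $z\mapsto\eta(z-b)/(1-\overline{b}z)$ with $b\neq 0$ the pointwise estimate fails at the points of $r\T$ where $f^{\sharp}$ is largest, in agreement with the fact that central normalization is indispensable in the theorem.
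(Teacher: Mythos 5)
Your reduction is correct and is in fact identical to the first step of the paper's argument: by Gauss--Bonnet, $\Phi_s(r)=\frac{1+r^2}{2\pi(1-r^2)}\int_0^{2\pi}h_f(re^{it})\,dt$, and differentiating shows that strict monotonicity of $\Phi_s$ is equivalent to the integral inequality
\begin{equation*}
(1-r^4)\int_0^{2\pi}f^{\sharp}(re^{it})^2\,dt \;<\; \int_0^{2\pi}h_f(re^{it})\,dt ,\qquad 0<r<1 .
\end{equation*}
The problem is everything after that. You propose to obtain this from the pointwise estimate $h_f(z)\ge(1-|z|^4)f^{\sharp}(z)^2$, but you do not prove that estimate: you name it as ``the main obstacle'' and say you ``expect'' it to follow from the distortion theory of Mej\'ia and Pommerenke. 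That is not a proof. The estimate is not stated in \cite{mejiapomsph} or \cite{mejiapommerenke} in this form, and, as you yourself observe, the two available decoupled bounds --- the Mej\'ia--Pommerenke inequality $h_f(z)\ge\frac{1-|z|^2}{1+|z|^2}$ and the bound $(1-|z|^2)f^{\sharp}(z)\le1$ --- do not combine to give it, because $(1-|z|^4)f^{\sharp}(z)^2$ can exceed $\frac{1-|z|^2}{1+|z|^2}$ (already for the lens map $f_1$ on the real axis, where $f^{\sharp}$ blows up like $(1-x^2)^{-1/2}$). So the proposal reduces the theorem to an unproven, genuinely nontrivial inequality that is at least as hard as the theorem itself; the sketch of how to prove it (``after a rotation normalizing $f$ at the given point, reduce to a one-variable estimate'') is not carried out and is delicate, since post-composition with a rotation preserves $f^{\sharp}$ but destroys the central normalization at the origin that the estimate depends on.

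For comparison, the paper closes exactly this gap without any pointwise coupling of $h_f$ and $f^{\sharp}$. Setting $D(r):=\int_0^{2\pi}h_f(re^{it})\,dt-(1-r^4)\int_0^{2\pi}f^{\sharp}(re^{it})^2\,dt$ (so that $D(r)=\frac{\pi}{2r}(1-r^2)^2\Phi_s'(r)$), one differentiates once more, using $\frac{\partial}{\partial r}\int_0^{2\pi}h_f\,dt=-4r\int_0^{2\pi}(f^{\sharp})^2\,dt$ and $\frac{\partial}{\partial r}\bigl(f^{\sharp}(re^{it})^2\bigr)=\frac{2}{r}f^{\sharp}(re^{it})^2\bigl(h_f(re^{it})-1\bigr)$, to get
\begin{equation*}
D'(r)=\frac{2(1-r^4)}{r}\int_0^{2\pi}f^{\sharp}(re^{it})^2\left[\frac{1-r^2}{1+r^2}-h_f(re^{it})\right]dt\le0
\end{equation*}
by the Mej\'ia--Pommerenke bound alone; then $D(r)\ge\lim_{\rho\to1^-}D(\rho)\ge0$, with strictness handled via Theorem \ref{hintbounds}. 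If you want to salvage your route, you must either supply a complete proof of your pointwise estimate (including its equality case, which you also need for strictness after integration), or switch to an argument, like the one above, that only uses the integrated information.
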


One of the crucial ingredients of the proofs of the above theorems is a basic result  from \cite[Theorem 4]{mamindasphlinear} which guarantees  that a meromorphic univalent function $f$ in $\D$ is spherically convex if and only if the auxiliary function
\begin{equation}\label{sphconfun}
h_f(z):= \RE \left\{ 1 + \frac{z f^{\prime \prime }(z)}{f^{\prime}(z)}-\frac{2z f^{\prime}(z) \overline{f(z)}}{1+\left| f(z) \right|^2} \right\} 
\end{equation}
has the property that
$$ h_f(z) \geq 0 \quad \text{ for every } z \in \D \, .$$ This characterization of spherical convexity has an elegant geometric interpretation in terms of the spherical curvature $\kappa_s(f(z), f(r \T)$ of the curve  $f(r \T)$ at the point $f(z)$, $|z|=r$,  since
$$h_f(z)=\kappa_s (f(z),f(r \T)) f^{\sharp}(z) |z| \,,$$ see (\ref{scurvimage}) below, so a meromorphic univalent function $f$ in $\D$ is spherically convex if and only if $$\kappa(f(z),f(r \T)) \geq 0 \quad  \text{ for all } |z|=r \text{ and all }  0<r<1 \,  .$$
For further information on spherical convexity and spherically convex functions we refer to Section \ref{sectionsph} and also to \cite{hypmetsph, mamindasphlinear,  sphkcon, sph2,  mejiapomsph, app,  sugawa} as well as to the recent work \cite{kelgiannis2}, where monotonicity results are proved regarding the elliptic-area-radius of $f( r \D)$ and condenser capacity. Other variants of the Schwarz lemma for meromorphic functions can be found e.g.~in \cite{Dubinin, Solynin}.

\medskip

The paper is structured in the following way. In Section \ref{sectionsph} we recall a number of  basic facts about  spherical geometry and spherical convexity which are necessary for our investigations, including the  spherical Gauss--Bonnet Theorem and the spherical isoperimetric inequality.
In Section \ref{hf} we study the auxiliary function $h_f$ defined in (\ref{sphconfun}) and give a new characterization of spherical convexity as  well as establishing a sharp lower bound for the integral means of $h_f$. A corresponding \textit{pointwise} sharp lower estimate for $h_f$ has been given
by Mej{\'i}a and Pommerenke in their important work \cite{mejiapommerenke} on the Schwarzian derivative for spherically convex functions. While the estimate of Mej{\'i}a and Pommerenke is valid only for centrally normalized functions,  our 'integrated' version does hold for any spherically convex function and  possesses a natural geometric significance in terms of total geodesic curvature.\hide{,  and   will be a central tool for our work.}
The spherical Schwarz--type lemmas, Theorem \ref{thm:schwarzarea} and \ref{lowerboundlength1} and Remark \ref{rem:schwarzlength}, are proved in Section \ref{sec:schwarz}. Then attention shifts to monotonicity results for spherically convex functions. 
In Section \ref{monotonarealength} we consider spherical area and prove Theorem \ref{thm:monotarea} as well as Corollary \ref{lowerboundarea2} and Theorem \ref{lowerboundlength2}.
The monotonicity of spherical length  (Theorem \ref{thm:monotlength}) and of 
total spherical curvature (Theorem \ref{thm:monotcurv}) for centrally normalized spherically convex functions 
is established in Section \ref{monotoncurv}. In a final Section \ref{examples} we illustrate by examples that spherical convexity is a basic requirement for Theorems \ref{thm:monotarea}, \ref{thm:monotlength} and \ref{thm:monotcurv} and that central normalization is a necessary hypothesis for Theorem \ref{thm:monotcurv}.

\section{Spherical Convexity - Gauss Bonnet formula - Isoperimetric Inequality}\label{sectionsph}

Suppose $f: \D \to \CC$ is a meromorphic univalent function and $f(\D)$ is a hyperbolic domain in $\CC$. 

\begin{lemma}\cite[Theorem 1]{hypmetsph}\label{mu}
The spherical density $\left(1-|z|^2\right) f^{\sharp}(z)$ is a superharmonic function on $\D$ if and only if $f(\D)$ is a spherically convex domain.
\end{lemma}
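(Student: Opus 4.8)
The plan is to eliminate the map $f$ and reduce the statement to an intrinsic property of the image domain $\Omega:=f(\D)$. Writing $\sigma(z):=(1-|z|^2)f^{\sharp}(z)$ and using the conformal scaling rule $\lambda_\Omega(f(z))\,|f'(z)|=\lambda_\D(z)=(1-|z|^2)^{-1}$ for the hyperbolic metric, the factor $|f'|$ cancels and
\[
\sigma(z)=\frac{\lambda_\CC(f(z))}{\lambda_\Omega(f(z))}=(Q\circ f)(z),\qquad Q:=\frac{\lambda_\CC}{\lambda_\Omega}\ \text{ on }\ \Omega .
\]
Since $f$ is holomorphic and locally univalent, $\Delta_z(Q\circ f)=|f'(z)|^2\,(\Delta_wQ)(f(z))$ on the set where $f$ is finite (poles being handled in the chart $w\mapsto 1/w$, in which the whole set-up is rotation invariant). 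Hence $\sigma$ is superharmonic on $\D$ if and only if $Q=\lambda_\CC/\lambda_\Omega$ is superharmonic on $\Omega$. This reduction is what makes the equivalence tractable: superharmonicity of $\sigma$ is \emph{not} a pointwise consequence of the convexity criterion $h_f\ge 0$, since $\Delta\sigma$ involves $|f''/f'|^2$ whereas $h_f$ is only first order in $f''/f'$.

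The task is now to show that $\Omega$ is spherically convex if and only if $Q$ is superharmonic on $\Omega$. I would first convert superharmonicity of $Q$ into a first-order inequality. From the curvature equations $\Delta\log\lambda_\CC=-4\lambda_\CC^2$ and $\Delta\log\lambda_\Omega=+4\lambda_\Omega^2$ (curvatures $+4$ and $-4$) together with $\Delta Q=Q\bigl(\Delta\log Q+|\nabla\log Q|^2\bigr)$, the term $\Delta\log Q=-4(\lambda_\CC^2+\lambda_\Omega^2)$ combines with $|\nabla\log Q|^2=4\,|(\log\lambda_\CC)_w-(\log\lambda_\Omega)_w|^2$ to yield
\[
\Delta Q\le 0\iff \bigl|(\log\lambda_\CC)_w-(\log\lambda_\Omega)_w\bigr|^2\le \lambda_\CC^2+\lambda_\Omega^2 ,
\]
an inequality I shall call $(\ast)$. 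As the Euclidean and spherical Laplacians differ only by the positive factor $\lambda_\CC^{-2}$, condition $(\ast)$ may be read intrinsically on the sphere.

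Finally I would prove that $(\ast)$ holds throughout $\Omega$ precisely when $\Omega$ is spherically convex. For the model case of a spherical half-plane -- after a rotation, the hemisphere $H=\{|w|<1\}$ -- one finds
\[
Q_H=\frac{1-|w|^2}{1+|w|^2},\qquad \Delta Q_H=\frac{8(|w|^2-1)}{(1+|w|^2)^3}<0 ,
\]
so $(\ast)$ holds, in agreement with the fact that hemispheres are spherically convex. For a general smooth $\partial\Omega$, spherical convexity is equivalent to nonnegativity of the spherical geodesic curvature $\kappa_s$ of $\partial\Omega$; inserting the boundary expansion of $\lambda_\Omega$ in powers of the spherical distance $d$ to $\partial\Omega$ (whose first correction is governed by $\kappa_s$) into $Q=\lambda_\CC/\lambda_\Omega$ shows that $(\ast)$ holds near $\partial\Omega$ when $\kappa_s\ge 0$ on the whole boundary and fails near any boundary arc on which $\kappa_s<0$. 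The non-smooth case then follows by exhausting $\Omega$ from inside by smooth spherically convex domains and passing to the limit via monotonicity of the hyperbolic metric.

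The main obstacle is this last step, and in particular promoting the boundary analysis to the inequality $(\ast)$ in the interior of $\Omega$. The tempting shortcut of writing a convex $\Omega$ as an intersection $\bigcap_\alpha H_\alpha$ of hemispheres and inheriting superharmonicity from the functions $Q_{H_\alpha}$ does \emph{not} work directly: monotonicity of the hyperbolic metric gives only $Q_\Omega\le Q_{H_\alpha}$, not $Q_\Omega=\inf_\alpha Q_{H_\alpha}$, and an infimum of superharmonic functions need not be superharmonic. For this reason I would argue through the boundary geodesic curvature together with an interior maximum-principle argument rather than through the intersection representation.
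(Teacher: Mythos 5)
First, a point of reference: the paper does not prove this lemma at all -- it is imported verbatim as Theorem 1 of Kim and Minda \cite{hypmetsph} -- so your attempt can only be judged on its own merits. Your opening reductions are correct and are the natural way to begin: the identity $\left(1-|z|^2\right)f^{\sharp}(z)=\lambda_{\CC}(f(z))/\lambda_{\Omega}(f(z))$ with $\Omega=f(\D)$, the conformal invariance $\Delta_z(Q\circ f)=|f'|^2\,(\Delta_w Q)\circ f$, and the conversion of $\Delta Q\le 0$ into your first-order inequality $(\ast)$ via the curvature equations are all sound, as is your observation that superharmonicity is genuinely second order in $f$ and cannot be read off pointwise from $h_f\ge 0$.

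The genuine gap is that the equivalence of $(\ast)$ with spherical convexity -- which is the entire content of the theorem -- is never proved. You verify one model case (the hemisphere) and then propose a boundary expansion of $\lambda_\Omega$ combined with ``an interior maximum-principle argument.'' This does not work: $\Delta Q\le 0$ is the \emph{conclusion} you are after, not a sign condition on some auxiliary sub- or superharmonic function that could be propagated inward from $\partial\Omega$; there is no function in your setup to which a maximum principle applies, and the set where a pointwise differential inequality such as $(\ast)$ holds is not controlled by boundary behaviour. Moreover, the asserted near-boundary validity of $(\ast)$ under $\kappa_s\ge0$ is itself only a claim, since the subleading terms of $\lambda_\Omega$ near a boundary point are not purely local. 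You candidly flag this step as the main obstacle, which is exactly right -- but it means the proof is missing in the substantive direction. The standard way to close it (and, in essence, the route of \cite{hypmetsph}) is to exploit invariance rather than boundary analysis: since $\sigma_{f\circ\varphi}=\sigma_f\circ\varphi$ for every $\varphi\in\Aut(\D)$ and $\sigma_{T\circ f}=\sigma_f$ for every $T\in\Rot(\CC)$, while spherical convexity of the image is likewise unchanged, the inequality $\Delta\sigma_f\le0$ on all of $\D$ reduces to the single inequality $\Delta\sigma_f(0)\le0$ over the whole (re-normalized) class; after arranging $f(0)=0$ this becomes the sharp coefficient bound $|f''(0)|\le 2|f'(0)|\sqrt{1+|f'(0)|^{2}}$ for spherically convex $f$, and the converse localizes in the same way. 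Establishing that coefficient bound (via $h_f\ge0$ or supporting hemispheres) is the real work, and it is absent from your proposal.
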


\begin{lemma}\cite[p.288]{hypmetsph}\label{sphderupper}
If $f$ is spherically convex on $\D$, then $\left(1-|z|^2\right) f^{\sharp}(z) \leq 1$ for every $z \in \D$. 
Equality holds  for some $z \in \D$ if and only if $f$ maps $\D$ onto a hemisphere and $z$ is the spherical center of the hemisphere. 
\end{lemma}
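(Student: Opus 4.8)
The plan is to convert the pointwise bound into a comparison of two conformal metrics on the image domain, and then to bring in spherical convexity through the fact that a spherically convex domain is always contained in an open hemisphere. Write $\Omega=f(\D)$. Since $f$ is meromorphic and univalent, it is a conformal bijection of $\D$ onto $\Omega$, hence a hyperbolic isometry. Denote by $\lambda_{\Omega}(w)\,|dw|$ the hyperbolic metric of the hyperbolic domain $\Omega$, normalized to curvature $-4$ so that $\lambda_{\D}(z)=1/(1-|z|^2)$; then $\lambda_{\Omega}(f(z))\,|f'(z)|=\lambda_{\D}(z)$, while by definition $f^{\sharp}(z)=\lambda_{\CC}(f(z))\,|f'(z)|$. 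Dividing these two identities gives, for every $z\in\D$,
\[ \left(1-|z|^2\right)f^{\sharp}(z)=\frac{f^{\sharp}(z)}{\lambda_{\D}(z)}=\frac{\lambda_{\CC}(w)}{\lambda_{\Omega}(w)}, \qquad w=f(z). \]
Thus the assertion becomes equivalent to the metric inequality $\lambda_{\CC}(w)\le\lambda_{\Omega}(w)$ for all $w\in\Omega$, with the equality discussion transported to $\Omega$.

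I would stress at the outset that the naive Ahlfors--Schwarz argument is useless here: the spherical metric $\lambda_{\CC}$ has curvature $+4>-4$, so a curvature comparison points the wrong way, and one checks directly that superharmonicity of $\left(1-|z|^2\right)f^{\sharp}(z)$ (Lemma \ref{mu}) only produces \emph{lower} bounds. This is precisely where spherical convexity must be used. The key geometric input is the standard fact that a spherically convex domain $\Omega\neq\CC$ is contained in a closed hemisphere; being open, $\Omega$ is then contained in the corresponding open hemisphere $H$. By monotonicity of the hyperbolic metric under inclusion, $\lambda_{\Omega}(w)\ge\lambda_{H}(w)$ for all $w\in\Omega$, with strict inequality everywhere when $\Omega$ is a proper subdomain of $H$. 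It therefore suffices to prove $\lambda_{\CC}\le\lambda_{H}$ on any open hemisphere $H$.

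For a hemisphere the remaining step is a one-line computation after normalization. Any open hemisphere $H$ is carried onto the standard hemisphere $\D=\{|w|<1\}$ by a rotation $T\in\Rot(\CC)$; as $T$ is simultaneously a spherical isometry (preserving $\lambda_{\CC}$) and a conformal bijection (preserving the hyperbolic metric), the inequality for $H$ reduces to the case $H=\D$, where it reads $\tfrac{1}{1+|w|^2}\le\tfrac{1}{1-|w|^2}$ for $|w|<1$, with equality exactly at the center $w=0$. Assembling the three steps yields $\lambda_{\CC}(w)\le\lambda_{\Omega}(w)$ on $\Omega$, that is, $\left(1-|z|^2\right)f^{\sharp}(z)\le1$ on $\D$.

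For the equality statement I would trace the chain backwards. If $\left(1-|z|^2\right)f^{\sharp}(z)=1$ at some point, then $\lambda_{\CC}(w)=\lambda_{\Omega}(w)$ at $w=f(z)$; the strict monotonicity of the hyperbolic metric forces $\Omega$ to coincide with the open hemisphere $H$ containing it (otherwise $\lambda_{\Omega}>\lambda_{H}\ge\lambda_{\CC}$ at $w$), and the explicit hemisphere computation then forces $w=f(z)$ to be the spherical center of $\Omega$; the converse is immediate. I expect the genuine obstacle to be conceptual rather than computational: recognizing that the bound is a metric-domination statement that truly requires the containment-in-a-hemisphere property of spherically convex domains, together with the strict Schwarz--Pick monotonicity needed for the equality case, rather than any curvature estimate.
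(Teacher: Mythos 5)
The paper offers no proof of this lemma---it is imported verbatim from Kim and Minda \cite{hypmetsph}---so there is no internal argument to compare against; your proof is correct and is essentially the standard one from that source. Namely: rewrite $(1-|z|^2)f^{\sharp}(z)$ as the ratio $\lambda_{\CC}(w)/\lambda_{\Omega}(w)$ of the spherical to the hyperbolic density of $\Omega=f(\D)$, use that a proper spherically convex domain sits inside an open hemisphere together with the (strict) domain monotonicity of the hyperbolic metric, and conclude with the explicit comparison $1/(1+|w|^2)\le 1/(1-|w|^2)$ on the standard hemisphere $\{|w|<1\}$, which also yields the equality case. The only ingredient you invoke without proof---that a proper spherically convex domain lies in a closed, hence (being open) an open, hemisphere---is indeed standard and can be found in \cite{mindabloch,app}.
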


\begin{proposition}\cite[Theorem 4]{mamindasphlinear} 
Let $f :\D \to \CC$ be a meromorphic univalent function. Then $f$ is spherically convex if and only if 
 $$ h_f(z) = \RE \left\{  1 + \frac{z f^{\prime \prime}(z)}{f^{\prime}(z)} - 2 \frac{z f^{\prime}(z) \overline{f(z)}}{1+|f(z)|^2} \right\}  \geq 0,  \quad z \in \D.$$
\end{proposition}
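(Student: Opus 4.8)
The plan is to reduce the analytic inequality $h_f \ge 0$ to a purely geometric statement about the image curves $f(r\T)$, via the identity announced in the introduction,
$$ h_f(z) = \kappa_s\big(f(z), f(r\T)\big)\, f^{\sharp}(z)\,|z|, \qquad |z| = r, $$
and then to invoke the fact that a Jordan domain on $\CC$ is spherically convex precisely when its boundary curve has nonnegative spherical geodesic curvature with respect to the inner normal. Since $f$ is univalent, $f^{\sharp}(z) > 0$ throughout $\D$, so for $z \ne 0$ the sign of $h_f(z)$ coincides with the sign of $\kappa_s(f(z), f(r\T))$, while at the center one computes directly $h_f(0) = 1 > 0$. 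Hence $h_f \ge 0$ on $\D$ if and only if every circle image $f(r\T)$, $0 < r < 1$, has nonnegative spherical geodesic curvature.

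To establish the displayed identity I would parametrize the image curve by $\gamma(t) = f(re^{it})$ and use the formula $\kappa_s = \lambda_{\CC}^{-1}\big(\kappa_e - \partial_n \log \lambda_{\CC}\big)$ for the geodesic curvature of a curve in the conformal spherical metric $\lambda_{\CC}(w)\,|dw|$, where $\kappa_e$ denotes the euclidean geodesic curvature and $\partial_n$ the inner normal derivative. A short computation gives $\gamma'(t) = iz f'(z)$, whence the euclidean turning term is
$$ \kappa_e\,|\gamma'| = \IM \frac{\gamma''}{\gamma'} = \RE\left(1 + \frac{z f''(z)}{f'(z)}\right), $$
while the spherical correction term evaluates to
$$ \big(\partial_n \log \lambda_{\CC}\big)\,|\gamma'| = \RE\left( \frac{2 z f'(z)\,\overline{f(z)}}{1 + |f(z)|^2}\right). $$
Subtracting these and using $\lambda_{\CC}(\gamma)\,|\gamma'| = f^{\sharp}(z)\,|z|$ yields exactly $h_f(z)$, which proves the identity and the sign equivalence above.

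It remains to prove the geometric equivalence. For the implication ``$h_f \ge 0 \Rightarrow f$ spherically convex'', nonnegative geodesic curvature makes each $f(r\T)$ a spherically convex curve; the spherical Gauss--Bonnet formula then forces $\ArS f(r\D) \le \pi/2$, so that $f(r\T)$ bounds a spherically convex cap lying inside a hemisphere, i.e.\ $f(r\D)$ is a spherically convex domain. Since $f(\D) = \bigcup_{0<r<1} f(r\D)$ is an increasing union of spherically convex domains, it is itself spherically convex. For the converse I would use that spherical convexity is \emph{hereditary}, so that $f(\D)$ spherically convex implies each $f(r\D)$ spherically convex; its boundary $f(r\T)$ is then a spherically convex curve, giving $\kappa_s \ge 0$ and hence $h_f \ge 0$. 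Alternatively, one may bypass the curvature computation and deduce the equivalence from Lemma \ref{mu} by verifying that superharmonicity of $(1-|z|^2)f^{\sharp}(z)$ is equivalent to $h_f \ge 0$ through a direct Laplacian computation.

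The hard part will be the geometric equivalence in the spherical setting, where positive curvature produces genuine difficulties absent in the euclidean and hyperbolic cases: a locally convex closed curve need not bound a convex region unless the enclosed area is small enough to fit inside a hemisphere, so the Gauss--Bonnet area bound is indispensable and must be coupled with a support-geodesic argument. Likewise, the hereditary property of spherical convexity requires a separate justification rather than following formally from the definition, since a spherical geodesic joining two points of $f(r\D)$ a priori only lies in the larger domain $f(\D)$.
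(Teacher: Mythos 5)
This Proposition is not proved in the paper at all: it is quoted verbatim from Ma--Minda \cite[Theorem 4]{mamindasphlinear}, so there is no internal proof to compare against. Judged on its own terms, your proposal gets the correct reduction --- the identity $h_f(z)=\kappa_s(f(z),f(r\T))\,f^{\sharp}(z)|z|$ is exactly \eqref{scurvimage}, which the paper derives from Proposition \ref{formcurv}, and your computation of the euclidean turning term and the conformal correction is sound --- but the two implications that constitute the actual content of the theorem are asserted rather than proved. (i) For ``$h_f\ge 0\Rightarrow f$ spherically convex'' you need that a simple closed curve on $\CC$ with nonnegative spherical geodesic curvature bounds a spherically convex region; your intermediate claim that Gauss--Bonnet gives $\ArS f(r\D)\le\pi/2$ and that this places $f(r\D)$ inside a hemisphere is a non sequitur (area at most that of a hemisphere does not imply containment in one --- think of a thin simply connected strip winding along a great circle), and the ``support-geodesic argument'' you say must be supplied is never supplied. (ii) For the converse you invoke a hereditary property (the spherical analogue of Study's theorem, that $f(\D)$ spherically convex forces every $f(r\D)$ to be spherically convex), which you yourself note does not follow formally from the definition and which you do not prove. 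These two steps are precisely where Ma and Minda do the work; without them the argument is a reduction, not a proof.

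The proposed shortcut via Lemma \ref{mu} also does not work as described. Writing $g(z)=(1-|z|^2)f^{\sharp}(z)$ and $w=\partial_z\log f^{\sharp}$, a direct computation gives
\begin{equation*}
\frac{(1-|z|^2)\,\De g(z)}{4\,g(z)} \;=\; -\,h_f(z)\;+\;(1-|z|^2)\left(|w(z)|^2-f^{\sharp}(z)^2\right),
\end{equation*}
so $\De g\le 0$ is \emph{not} pointwise equivalent to $h_f\ge 0$; the extra gradient term can have either sign. The two conditions are equivalent only as global statements on $\D$, each being equivalent to spherical convexity --- which is the assertion to be proved, so this route is circular. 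Contrast this with the function $h_f$ itself, for which Theorem \ref{hsuperharmonic} does give the clean identity $\De h_f=-8(f^{\sharp})^2h_f$ with no gradient term; that is why the paper works with $h_f$ and not with $g$.
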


 Mej{\'i}a and Pommerenke,  see \cite[(3.14)]{mejiapommerenke}, have proved that 
 for any \textit{centrally normalized} spherically convex  function $f$,
 \begin{equation}\label{lowerhf}
 h_f(z) \geq \frac{1-|z|^2}{1+|z|^2} \, .
 \end{equation}
In fact, it is not difficult for the reader to convince himself that equality can hold in (\ref{lowerhf}) for some $z \in \D$ if and only if $f$ is a spherical isometry. 

For a geometric interpretation of spherical convexity, we briefly discuss the notion of \textit{spherical curvature}. A curve $\gamma$ is said to have  spherical curvature $\kappa_s(z, \gamma) $ equal to $0$ at any of its points if and only if it is a spherical geodesic. 

Let $\gamma :z=z(t)$ be a $C^2$ curve on $\CC$ with everywhere non-vanishing tangent. The spherical curvature of $\gamma$ at $z(t)$ is $$\kappa_s (z(t), \gamma) \lambda_{\CC} (z(t))= k(z(t), \gamma) - \IM \left\{ \frac{2 \overline{z(t)} z^{\prime}(t)}{\left(1+|z(t)|^2\right) |z^{\prime}(t)|} \right\}, $$
where $\kappa (z(t), \gamma) $ is the euclidean curvature of $\gamma$ at $z(t)$. 

It can easily be calculated that the spherical curvature of $r \T$ at a point $z\in r\T$ is equal to 
\begin{equation}\label{sphcurvrT}
\kappa_s(z, r \T) = \frac{1-r^2}{r}. 
\end{equation}

\begin{proposition}\cite[Theorem 3]{mamindasphlinear}
If $\Omega \subset \mathbb{P}$ has $\mathcal{C}^2 $ smooth boundary and $\Omega $ is spherically convex, then for all $z \in \partial \Omega $, $\kappa_s(z, \partial \Omega) \geq 0$.  

\end{proposition}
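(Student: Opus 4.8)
The plan is to reduce the assertion to a statement about ordinary euclidean curvature at a single, conveniently normalized boundary point, and then to feed in spherical convexity through a supporting geodesic. First I would fix an arbitrary point $z_0 \in \partial \Omega$ and choose an orientation-preserving spherical isometry $T \in \Rot(\CC)$ with $T(z_0)=0$ whose differential carries the tangent direction of $\partial \Omega$ at $z_0$ to the direction of the positive real axis; this is possible because the orientation-preserving elements of $\Rot(\CC)$ act transitively on unit tangent vectors. Since the extended real axis is itself a great circle, it becomes the tangent geodesic of $\partial(T\Omega)$ at the origin. Spherical curvature is a metric invariant and $T$ is a spherical isometry, so $\kappa_s(z_0,\partial \Omega)=\kappa_s(0,\partial(T\Omega))$, and $T\Omega$ is again spherically convex; thus there is no loss in assuming $z_0=0$ and that $\partial \Omega$ is tangent to the real axis at $0$, with $\Omega$ spherically convex.

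At the origin the defining formula for spherical curvature simplifies: the imaginary-part correction term carries a factor $\overline{z(t)}$ and hence vanishes when $z(0)=0$, while $\lambda_{\CC}(0)=1$, so that $\kappa_s(0,\partial \Omega)$ equals the ordinary euclidean curvature $\kappa(0,\partial \Omega)$. (This is exactly the cancellation that produces the clean value $\kappa_s(z,r\T)=(1-r^2)/r$ in \eqref{sphcurvrT}.) Writing $\partial \Omega$ near $0$ as a graph $y=\phi(t)$ with $\phi(0)=\phi'(0)=0$, the euclidean curvature at the origin is simply $\phi''(0)$, so the entire statement comes down to proving $\phi''(0)\ge 0$, that is, that $\Omega$ lies locally above its tangent geodesic.

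This is precisely where spherical convexity enters, and the step I expect to be the crux. The key input is the spherical analogue of the supporting-hyperplane theorem: at a $C^2$ boundary point of a spherically convex domain, the tangent geodesic is a \emph{supporting} geodesic, so that $\Omega$ lies in one of the two closed hemispheres it bounds. Granting this, and fixing the standard orientation of $\partial \Omega$ so that $\Omega$ lies to the left (the sign convention consistent with \eqref{sphcurvrT}), $T\Omega$ is contained in the upper half-plane $\{\IM z\ge 0\}$; hence $\phi\ge 0=\phi(0)$, the origin is a local minimum of $\phi$, and $\phi''(0)\ge 0$, which finishes the argument. To justify the supporting-geodesic property I would use that a spherically convex domain is contained in an open hemisphere (the situation behind Lemma \ref{sphderupper}), within which geodesics behave qualitatively like euclidean lines, so that $\Omega$ is the intersection of the closed hemispheres containing it and the supporting hemisphere at a smooth point is bounded by the tangent geodesic. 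The main obstacle is handling this step with care: the positive curvature of the sphere and antipodal phenomena forbid importing euclidean convexity arguments verbatim, and it is the confinement to a single hemisphere that makes the reduction legitimate.

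Finally, I would record that the same conclusion can be reached analytically through the preceding Proposition. Taking a conformal map $f:\D\to \Omega$, spherical convexity gives $h_f\ge 0$ on $\D$; combining this with the identity expressing $h_f(z)$ as a nonnegative multiple of $\kappa_s(f(z),f(r\T))$ (established later in the paper) yields nonnegativity of the spherical curvature of each level curve $f(r\T)$, and letting $r\to 1$ while invoking the $C^2$ regularity of $\partial \Omega$ recovers $\kappa_s(\cdot,\partial \Omega)\ge 0$. I would nevertheless keep the geometric argument as the primary one, since it is self-contained and makes transparent why the supporting-geodesic step is where positive curvature carries the real weight.
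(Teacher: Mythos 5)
The paper does not prove this proposition at all: it is quoted, with a citation, from Ma--Minda \cite[Theorem 3]{mamindasphlinear}, so there is no in-paper argument to compare yours against. On its own terms your reduction is sound: the orientation-preserving rotations act transitively on unit tangent vectors of $\CC$, both $\kappa_s$ and spherical convexity are $\Rot(\CC)$-invariant, and at the origin the correction term in the definition of $\kappa_s$ carries the factor $\overline{z(t)}$ and vanishes while $\lambda_{\CC}(0)=1$, so the claim does collapse to $\phi''(0)\ge 0$ for a local graph representation of $\partial\Omega$ with $\Omega$ on the appropriate side. This is the natural geometric route, and the sign conventions you invoke are consistent with \eqref{sphcurvrT}.

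The one place that needs tightening is the supporting-geodesic step, and the specific assertion that a spherically convex domain is contained in an \emph{open} hemisphere is not quite right: a hemisphere is itself spherically convex and lies in no open hemisphere, and the general containment theorem only provides a \emph{closed} hemisphere. You should split off the hemisphere case (where $\partial\Omega$ is a great circle and $\kappa_s\equiv 0$ trivially) and, for the remaining case, either show that a $C^2$-smooth spherically convex domain other than a hemisphere does lie in an open hemisphere so that gnomonic projection converts the supporting-geodesic claim into the euclidean supporting-line theorem, or bypass the global statement entirely with a local contradiction: if $\phi''(0)<0$, take $a_\varepsilon=-\varepsilon+i\left(\phi(-\varepsilon)+\delta\right)$ and $b_\varepsilon=\varepsilon+i\left(\phi(\varepsilon)+\delta\right)$ in $\Omega$; a great circle confined to an $O(\varepsilon)$-neighborhood of $0$ has euclidean curvature $O(\varepsilon)$ there, so the spherical geodesic joining $a_\varepsilon$ to $b_\varepsilon$ deviates from the euclidean chord by only $O(\varepsilon^3)$, whereas strict concavity places the chord about $|\phi''(0)|\varepsilon^2/2$ below $\partial\Omega$ at $t=0$; choosing $\delta\ll\varepsilon^2$ forces the geodesic outside $\Omega$, contradicting spherical convexity. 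Finally, your closing ``analytic'' alternative via $h_f\ge 0$ should be dropped or clearly flagged as a consistency check only: in the source the inequality $h_f\ge 0$ is tied to the boundary curvature statement rather than being logically prior to it, so invoking it here risks circularity, and the passage $r\to 1$ would in any case require boundary regularity of the Riemann map.
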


\begin{proposition}\cite[Theorem 2]{mamindasphlinear}\label{formcurv}
Suppose $f:\D \to \CC$ is a meromorphic univalent function and $\gamma : z=z(t)$ is a $\mathcal{C}^2$ curve in $\D$. Then 
\begin{eqnarray}\label{sphcurvaturef}
  \kappa_s(f(z), f \circ \gamma) \left(1-|z|^2\right) f^{\sharp}(z) & =& \\ & &  \hspace*{-4cm} \nonumber  
   \kappa_h(z, \gamma) - \left(1-|z|^2\right)  \IM \left\{ \left(2\frac{\bar{z}}{1-|z|^2}- \frac{f^{\prime \prime }(z)}{f^{\prime }(z)} + \frac{ 2 f^{\prime }(z) \overline{f(z)}}{1+|f(z)|^2} \right) \frac{z^{\prime}(t)}{|z^{\prime}(t)|} \right\}, 
 \end{eqnarray}
where $ \kappa_h$ denotes the hyperbolic curvature and $$\kappa_h (z, \gamma) = \left(1-|z|^2\right) \kappa(z,\gamma) +2 \IM \left\{
\frac{\overline{z(t)} z^{\prime }(t)}{|z^{\prime}(t)|}\right\}.  $$  
\end{proposition}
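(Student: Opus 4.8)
The plan is to exploit the fact that geodesic curvature is an intrinsic, conformally natural quantity. Since $f$ is holomorphic and locally injective, the pullback of the spherical metric is
$$ f^{*}\big(\lambda_{\CC}(w)\,|dw|\big)=\lambda_{\CC}(f(z))\,|f'(z)|\,|dz|=f^{\sharp}(z)\,|dz|, $$
so $f$ is a local isometry from $(\D,\,f^{\sharp}|dz|)$ onto its image in $(\CC,\,\lambda_{\CC}|dw|)$. Consequently $\kappa_s(f(z),f\circ\gamma)$ equals the geodesic curvature of $\gamma$ computed \emph{in} $\D$ with respect to the conformal metric $f^{\sharp}(z)\,|dz|$. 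I would therefore first record the general transformation law for geodesic curvature under an arbitrary smooth conformal metric $\lambda(z)\,|dz|$ on a planar domain: for a $\mathcal{C}^2$ curve $\gamma:z=z(t)$ with non-vanishing tangent,
$$ \kappa_{\lambda}(z,\gamma)\,\lambda(z)=\kappa(z,\gamma)+\IM\left\{2\,\partial_z\log\lambda(z)\,\frac{z'(t)}{|z'(t)|}\right\}, $$
where $\kappa$ denotes euclidean curvature and $\partial_z=\tfrac12(\partial_x-i\partial_y)$. This is the standard formula for how geodesic curvature responds to a conformal change of background metric; it may be derived directly from the definition of geodesic curvature, or simply read off from the two special cases already supplied in this section.

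Indeed, taking $\lambda=\lambda_{\CC}=1/(1+|z|^2)$ gives $2\,\partial_z\log\lambda_{\CC}=-2\bar z/(1+|z|^2)$ and recovers verbatim the displayed definition of $\kappa_s$, while $\lambda=1/(1-|z|^2)$ gives $2\,\partial_z\log\lambda=2\bar z/(1-|z|^2)$ and recovers the stated expression for $\kappa_h$. These two consistency checks pin down all sign conventions. I would then apply the general law with $\lambda=f^{\sharp}=|f'|/(1+|f|^2)$. A short Wirtinger computation, using that $\log f'$ is locally holomorphic (so $\partial_z\log|f'|=\tfrac12 f''/f'$, with poles handled via the rotation-invariance of $f^{\sharp}$) together with $\partial_z\log(1+|f|^2)=f'\overline{f}/(1+|f|^2)$, yields
$$ 2\,\partial_z\log f^{\sharp}(z)=\frac{f''(z)}{f'(z)}-\frac{2f'(z)\overline{f(z)}}{1+|f(z)|^2}, $$
and substituting this into the transformation law produces
$$ \kappa_s(f(z),f\circ\gamma)\,f^{\sharp}(z)=\kappa(z,\gamma)+\IM\left\{\left(\frac{f''(z)}{f'(z)}-\frac{2f'(z)\overline{f(z)}}{1+|f(z)|^2}\right)\frac{z'(t)}{|z'(t)|}\right\}. $$

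Finally I would multiply through by $(1-|z|^2)$ and eliminate the euclidean curvature in favour of the hyperbolic one. From the stated formula for $\kappa_h$ we have $(1-|z|^2)\kappa(z,\gamma)=\kappa_h(z,\gamma)-2\IM\{\overline{z}\,z'/|z'|\}$, and rewriting $2\IM\{\overline z\,z'/|z'|\}=(1-|z|^2)\IM\{(2\bar z/(1-|z|^2))\,z'/|z'|\}$ allows the two imaginary-part contributions to be merged into a single bracket; collecting the three resulting terms gives exactly the asserted identity \eqref{sphcurvaturef}. The only genuinely delicate point is establishing the general transformation law with the correct sign and justifying its invariance under the holomorphic map $f$ (including at poles of $f$); once this is in place the remainder is routine Wirtinger calculus, with the two checks against the given $\kappa_s$ and $\kappa_h$ formulas serving as safeguards against sign errors.
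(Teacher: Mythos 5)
The paper does not actually prove this proposition; it is quoted directly from Ma--Minda \cite[Theorem 2]{mamindasphlinear}, so there is no internal proof to compare against. Your argument is correct and is essentially the intended derivation: since $f$ is a holomorphic local isometry from $(\D, f^{\sharp}(z)\,|dz|)$ onto its image in $(\CC,\lambda_{\CC}(w)\,|dw|)$, the quantity $\kappa_s(f(z),f\circ\gamma)$ is the geodesic curvature of $\gamma$ in the pulled-back metric, and the conformal transformation law $\kappa_{\lambda}\lambda=\kappa+\IM\{2\partial_z\log\lambda\cdot z'/|z'|\}$ combined with $2\partial_z\log f^{\sharp}=f''/f'-2f'\overline{f}/(1+|f|^2)$ yields the identity once the euclidean curvature is traded for the hyperbolic one. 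Your consistency checks against the displayed formulas for $\kappa_s$ and $\kappa_h$ do pin down the sign conventions correctly, and the remark on handling poles of $f$ via the rotation invariance of $f^{\sharp}$ covers the only delicate point in the Wirtinger computation.
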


Let $f:\D \to \CC$ be a meromorphic univalent function. For the definition of the function $\Phi_s(r)$, as stated in the Introduction, we will need the spherical curvature of $f(r \T)$. Therefore, according to \eqref{sphcurvaturef} 
\begin{eqnarray*}
\kappa_s(f(z), f(r \T)) \left(1-r^2\right) f^{\sharp}(z) & = &\frac{1+r^2}{r}  - \frac{1-r^2}{r} \IM \left\{ i \left[ 2 \frac{r^2}{ 1-r^2} - \frac{z f^{\prime \prime }(z)}{f^{\prime }(z)} + \frac{ 2 z f^{\prime }(z) \overline{f(z)}}{1+|f(z)|^2} \right] \right\} \\
& = &\frac{1-r^2}{r}  +  \frac{1-r^2}{r}\RE \left\{ \frac{z f^{\prime \prime }(z)}{f^{\prime }(z)} - \frac{ 2 z f^{\prime }(z) \overline{f(z)}}{1+|f(z)|^2} \right\}  \\
&=&\frac{1-r^2}{r} h_f(z) 
\end{eqnarray*}
for $z=re^{it}, r \in (0,1), t \in [0,2\pi]$ and $\kappa(z,\gamma)  = \frac{1}{r}$. 
Hence 
\begin{equation}\label{scurvimage} 
\kappa_s\left(f(z), f(r \T)\right) = \frac{h_f(z)}{|z|f^{\sharp}(z)}, 
\end{equation}
where $z =r e^{it}$. The total spherical curvature is a geometric quantity that measures how much a curve diverges from being spherically convex. 
From \eqref{sphcurvrT}, the total spherical curvature of $r \T$ is equal to  
\begin{equation}\label{totalr}
\int\limits_{r \T} \kappa_s(z, r \T) \lambda_{\CC}(z) \, |dz| =2 \pi \frac{1-r^2}{1+r^2}
\end{equation}
and from \eqref{scurvimage}, the total spherical curvature of $f(r \T)$ is 
\begin{equation}\label{totalf}
\int\limits_{f(r \T)}  \kappa_s(w, f(r \T) \lambda_{\CC}(w) \, |dw|  = \int\limits_{r \T} \kappa_s(f(z), f (r \T)) f^{\sharp}(z) \, |dz| \underset{\eqref{scurvimage}}{=} \int\limits_{0}^{2 \pi} h_f(r e^{it}) \, dt. 
\end{equation} 

For more information on spherical convexity and spherical curvature, the reader may refer to \cite{mamindasphlinear, sphkcon, app}. 

In the proof of Theorem \ref{thm:monotcurv}, we will use the Gauss-Bonnet formula in the following form, see \cite[Theorem 6.5]{spivak}. 
Let $M$ be an oriented two-dimensional Riemannian manifold with Gaussian curvature $K$ and volume element $d A$. Let $N \subset M $ be a compact two-dimensional manifold-with-boundary which is diffeomorphic to a subset of $\mathbb{R}^2$ and whose boundary is connected. Let $ds$ be the volume element of $\partial N $ and let $\kappa$ be the signed geodesic curvature of $\partial N$. 
Then 
\begin{equation}\label{GaussBonnet}
\int\limits_{N} K \, dA + \int\limits_{\partial N} \kappa \, ds = 2 \pi.
\end{equation}

The Riemann sphere $\CC$ endowed with the  spherical metric is a two-dimensional Riemannian manifold of constant Gaussian curvature equal to $4$. 
If $\Omega$ is a hyperbolic domain in $\CC$, the Gauss-Bonnet formula \eqref{GaussBonnet} takes the form 
\begin{equation}\label{GBS}
4 \ArS (\Omega) + \int\limits_{\gamma} \kappa_s (z, \gamma) \lambda_{\CC}(z) \, |dz| = 2 \pi, 
\end{equation}
where $\gamma$ is the boundary of $\Omega$ assuming that it is a smooth, simple and closed curve in $\CC$. 

Applying the Gauss-Bonnet formula \eqref{GBS} to $f( r \D)$ viewed as a two-dimensional manifold with boundary on the Riemann surface of $f$, we obtain
\begin{equation} \label{sphcurvGB}
\int\limits_0^{2 \pi} h_f(r e^{it}) \, dt = 2 \pi - 4 \ArS  f(r \D),
\end{equation}
where $\ArS f(r \D) $ is the spherical area of $f(r \D)$.

Last but not least, in order to prove lower bounds for the spherical length, we use the isoperimetric inequality of spherical geometry; see \cite{osserman}. Suppose $D$ is a simply connected smooth subdomain of $\CC$. Then 
\begin{equation}\label{isoperimetricineq} 
\LeS(\partial D)^2 \geq 4 \pi \ArS (D) - 4 \ArS(D)^2. 
\end{equation}

\section{The Function $h_f$}\label{hf}
For our purposes the following characterization of spherically convex functions in terms of
the function $$h_f(z)  = \RE \left\{ 1+  z \frac{f^{\prime \prime }(z)}{f^{\prime}(z)} - \frac{2z \overline{f(z)} f^{\prime}(z)}{1+|f(z)|^2} \right\}$$ turns out to be useful.

\begin{theorem}\label{hsuperharmonic}
  Let $f: \D \to \CC$ be a meromorphic univalent function. Then
  \begin{equation} \label{eq:h}
\De h_f(z) = -8 f^{\sharp}(z)^2\, h_f(z) \, .
    \end{equation}
In particular,  $f$ is spherically convex if and only if $h_f$ is superharmonic on $\D$. In this case, $h_f$ is strictly superharmonic, so $\Delta h_f<0$ in $\D$.
\end{theorem}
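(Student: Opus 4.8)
The plan is to prove the pointwise identity \eqref{eq:h} by rewriting $h_f$ in terms of the logarithm of the spherical derivative and then applying the Laplacian through a short chain of elementary differential–operator identities; the equivalence with spherical convexity and the strictness assertion then drop out of \eqref{eq:h} combined with the Ma--Minda characterization recalled above.

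First I would observe that the bracket defining $h_f$ is a logarithmic derivative of $f^{\sharp}$. Setting $u:=\log f^{\sharp}=\log|f'|-\log(1+|f|^2)$, a direct computation gives $\partial_z u=\tfrac12 f''/f'-\bar f f'/(1+|f|^2)$, so that $2z\,\partial_z u$ is exactly the non-constant part of $h_f$. Since $u$ is real, $z\partial_z u+\bar z\partial_{\bar z}u=2\RE\{z\partial_z u\}$, and hence
\[
h_f(z)=1+2\RE\{z\,\partial_z u\}=1+z\,\partial_z u+\bar z\,\partial_{\bar z}u .
\]
This reformulation is the crucial first move: it trades the opaque combination of $f,f',f''$ for the radial derivative of a potential, which behaves far better under a second differentiation.

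Next I would compute the curvature term $v:=\partial_z\partial_{\bar z}u$. Since $f$ is univalent, $f'$ is zero-free, so $\log|f'|$ is harmonic (away from the pole of $f$, and everywhere by smoothness of $h_f$) and contributes nothing. Using the chain rule for the holomorphic map $f$ together with the fact that $\lambda_{\CC}$ has Gaussian curvature $+4$ — equivalently $\partial_w\partial_{\bar w}\log(1+|w|^2)=(1+|w|^2)^{-2}$ — one obtains $\partial_z\partial_{\bar z}\log(1+|f|^2)=|f'|^2/(1+|f|^2)^2=(f^{\sharp})^2$, so $v=-(f^{\sharp})^2$. I would then apply $\Delta=4\partial_z\partial_{\bar z}$ to the reformulated $h_f$; because $\partial_{\bar z}z=\partial_z\bar z=0$ and mixed partials commute, the derivatives pass cleanly through the factors $z,\bar z$, and the Euler identity $z\partial_z+\bar z\partial_{\bar z}=r\partial_r$ collapses everything to
\[
\Delta h_f=8v+4\,r\partial_r v .
\]

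The step I expect to be the crux is recognizing that $r\partial_r v$ folds back into $h_f$. Here I would use $\log(-v)=2\log f^{\sharp}=2u$, so that $r\partial_r\log(-v)=2\,r\partial_r u=2(h_f-1)$, giving $r\partial_r v=2v(h_f-1)$. Substituting yields $\Delta h_f=8v+8v(h_f-1)=8v\,h_f=-8(f^{\sharp})^2 h_f$, which is \eqref{eq:h}. For the remaining claims: by the Ma--Minda Proposition $f$ is spherically convex iff $h_f\ge0$, and since $f^{\sharp}>0$ everywhere, \eqref{eq:h} shows this is equivalent to $\Delta h_f\le0$, i.e.\ to superharmonicity of $h_f$. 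For strictness, note $h_f(0)=1>0$, so $h_f\not\equiv0$; a nonnegative superharmonic function that is not identically zero cannot vanish at an interior point (strong minimum principle), whence $h_f>0$ throughout $\D$ and therefore $\Delta h_f=-8(f^{\sharp})^2 h_f<0$ everywhere.
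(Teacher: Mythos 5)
Your proposal is correct and follows essentially the same route as the paper: both write $h_f=1+2\RE\{z\partial_z\log f^{\sharp}\}$ and exploit the Liouville equation $\partial_z\partial_{\bar z}\log f^{\sharp}=-(f^{\sharp})^2$, then conclude via the Ma--Minda characterization and the strong minimum principle with $h_f(0)=1$. The only cosmetic difference is that the paper differentiates the complex function $v=1+2z\partial_z u$ and takes real parts at the end, while you work with the real form and the Euler operator $z\partial_z+\bar z\partial_{\bar z}=r\partial_r$; the computations are equivalent.
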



\begin{proof}
 Let  $u(z): =\log f^{\sharp} (z)$, so $f^{\sharp} (z)= e^{u(z)}$.
Taking the derivative  of $u$ with respect to $z$, we obtain $$\partial_z u(z) = \frac{1}{f^{\sharp} (z)} \partial_z f^{\sharp} (z)= \frac{f^{\prime \prime }(z)}{2 f^{\prime}(z)} - \frac{ \overline{f(z)} f^{\prime}(z)}{1+|f(z)|^2} . $$
This implies that the real part of  $v(z) : = 1+2z \partial_z u(z)$ is exactly  $h_f$.
Now  $u$ is a solution of the Liouville equation
$$\Delta u(z) = -4 e^{2u(z)}. \, $$
Hence the Laplacian of $v$ is given by  
\begin{align*}
    \De v(z) &= 4 \partial_z \overline{\partial}_z (1+2 z \partial_z u(z))   = 8 \partial_z (-z e^{2u(z)}) \\
    &= -8 e^{2u(z)} -8z e^{2u(z)} \cdot \left(2 \partial_z u(z)\right) = -8 e^{2 u(z)} v(z) \, .
\end{align*}
Taking the real part gives  (\ref{eq:h}). In particular,
 $\De h_f(z) \leq 0$ if and only if $h_f(z) \ge 0$, so $f$ is a spherically convex function if and only if $h_f$ is superharmonic. Suppose $f$ is spherically convex. If $h_f(z_0)=0$ for some $z_0 \in \D$, then $h_f$ would attain its global minimum at $z_0$ and hence would be constant $0$  by the minimum principle for superharmonic functions. But $h_f(0)=1$. This contradiction shows that $h_f$ is strictly positive on $\D$. Since $f$ is univalent, $f^{\sharp}$ never vanishes, and hence $h_f$ is strictly superharmonic. 
\end{proof}

Theorem \ref{hsuperharmonic} implies that for any spherically convex function $f : \D \to \CC$ the integral means
$$  \frac{1}{2\pi}\int\limits_0^{2 \pi} h_f(r e^{it}) \, dt$$
are strictly decreasing and log--concave. The following result provides the sharp lower bound for these integral means.

\begin{theorem} \label{hintbounds}
Let $f: \D \to \CC$ be spherically convex. Then for any $r \in (0,1)$
\begin{equation} \label{eq:totalcurv}
  \frac{1}{2\pi}\int\limits_0^{2 \pi} h_f(r e^{it}) \, dt \ge \frac{1-r^2}{1+r^2}\, .
\end{equation}
For fixed $r \in (0,1)$ equality holds in (\ref{eq:totalcurv}) if and only if $f$ is a spherical isometry.
\end{theorem}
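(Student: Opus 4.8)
The left-hand side of \eqref{eq:totalcurv} is, up to the factor $2\pi$, the total spherical curvature of $f(r\T)$, so the Gauss--Bonnet identity \eqref{sphcurvGB} rewrites it as
\[
\frac{1}{2\pi}\int_0^{2\pi} h_f(re^{it})\,dt = 1 - \frac{2}{\pi}\,\ArS f(r\D).
\]
Hence \eqref{eq:totalcurv} is \emph{equivalent} to the sharp area bound $\ArS f(r\D)\le \pi r^2/(1+r^2)=\ArS(r\D)$, that is, to Theorem~\ref{thm:schwarzarea}. Granting the latter, the inequality \eqref{eq:totalcurv} follows at once, and equality for a fixed $r$ corresponds exactly to equality in Theorem~\ref{thm:schwarzarea}, i.e.\ to $f$ being a spherical isometry. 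So one clean plan is simply to read off \eqref{eq:totalcurv} from the area lemma.

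Since, however, this result is placed before the area lemma, I would prefer a proof that does not presuppose Theorem~\ref{thm:schwarzarea}, taking the Mej\'ia--Pommerenke pointwise estimate \eqref{lowerhf} as the starting point. For a \emph{centrally normalized} $f$, integrating \eqref{lowerhf} over the circle $r\T$ gives \eqref{eq:totalcurv} immediately; moreover equality for a fixed $r$ forces $h_f(re^{it})=\frac{1-r^2}{1+r^2}$ for all $t$ by continuity, and the (strict) equality case of \eqref{lowerhf} then identifies $f$ as a spherical isometry.

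The remaining, and genuinely harder, step is to remove the central-normalization hypothesis. By \cite[Theorem 4]{mejiapomsph} write $f=T\circ g\circ\psi^{-1}$ with $T\in\Rot(\CC)$, $\psi\in\Aut(\D)$ and $g$ centrally normalized; since $h_f$ is unchanged under post-composition with rotations, $\int_0^{2\pi}h_f(re^{it})\,dt=\int_0^{2\pi}h_{g\circ\psi^{-1}}(re^{it})\,dt$, and Gauss--Bonnet turns \eqref{eq:totalcurv} into the assertion $\ArS g(\psi^{-1}(r\D))\le\ArS(r\D)$. Here $\psi^{-1}(r\D)$ is a hyperbolic disc of the same hyperbolic radius as $r\D$ but centred at $\psi^{-1}(0)\neq 0$ in general, whereas integrating \eqref{lowerhf} together with Gauss--Bonnet already gives the centred bound $\ArS g(r\D)\le\ArS(r\D)$. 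Thus everything reduces to the key geometric fact that, for a centrally normalized $g$, the spherical area of the $g$-image of a hyperbolic disc of given hyperbolic radius is maximal when the disc is centred at the origin, i.e.\ $\ArS g(\psi^{-1}(r\D))\le \ArS g(r\D)$. This is the step I expect to be the main obstacle; I would attack it through the superharmonicity of the spherical density $\sigma_f=(1-|z|^2)f^{\sharp}$ (Lemma~\ref{mu}), whose maximum central normalization places at the origin, via a circular-symmetrization or a ``move-the-centre'' variational argument that controls the derivative of the image area with respect to the centre and shows the origin to be the maximiser. Once this is in place, the general equality statement follows as in the centrally normalized case.
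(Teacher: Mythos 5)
Your first route is circular in the context of this paper: Theorem \ref{thm:schwarzarea} is itself deduced from the present theorem via Gauss--Bonnet (this is exactly the computation in Section \ref{sec:schwarz}), so ``granting the latter'' assumes what is to be proved unless you supply an independent proof of the area bound, which you do not. The observation that \eqref{eq:totalcurv} and Theorem \ref{thm:schwarzarea} are equivalent through \eqref{sphcurvGB} is correct, but it does not by itself prove either statement.

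Your second route has a genuine gap at precisely the point you flag. Integrating \eqref{lowerhf} does settle the centrally normalized case, but the passage to a general spherically convex $f$ reduces, as you say, to the claim that for a centrally normalized $g$ one has $\ArS g(\psi^{-1}(r\D)) \le \ArS (r\D)$, i.e.\ that among hyperbolic discs of fixed hyperbolic radius the one centred at the origin maximizes the spherical area of the image. This is not proved; it is only announced as something you ``would attack'' by symmetrization or a variational argument. Note that the superharmonicity of the density $(1-|z|^2)f^{\sharp}(z)$ (Lemma \ref{mu}) controls circular means of the density itself, whereas the area in question is the integral of the \emph{square} of that density against hyperbolic area over an off-centre disc, so it is far from clear that Lemma \ref{mu} alone yields the claim; tracking the equality case through such an argument would be a further difficulty. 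The paper avoids this reduction entirely: after normalizing $f(0)=0$ by a rotation, it uses the Mej{\'i}a--Pommerenke functions $p_t\in\mathcal{P}$ satisfying $\RE p_t(re^{it})=h_f(re^{it})$, averages over $t$ to obtain a function $P\in\mathcal{P}$ whose first Taylor coefficient vanishes automatically (by the mean value property and $f(0)=0$), and then applies the classical Schwarz lemma to $(P-1)/(P+1)$, which has a zero of order at least two at the origin, to conclude $\RE P(r)\ge(1-r^2)/(1+r^2)$ together with the equality analysis. That averaging trick, which makes central normalization unnecessary, is the key idea missing from your proposal.
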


Theorem \ref{hintbounds} is an integrated version of the Mej{\'i}a--Pommerenke inequality (\ref{lowerhf}), but with the additional benefit that we do not need to assume central normalization.
The estimate (\ref{eq:totalcurv}) has a  natural geometric interpretation by observing that the integral expression is precisely the \textit{normalized total spherical curvature} of $f(r \T)$, while the right-hand side is 
the normalized total spherical curvature of the circle $r \T$, see Section \ref{sectionsph}.

\begin{proof} Since $h_{T \circ f}=h_f$ for any $T \in\Rot(\CC)$ we may assume $f(0)=0$.
Fix $t \in [0,2 \pi]$. We apply a beautiful idea from \cite[Theorem 1 \& (3.13)]{mejiapomsph}, namely that the function 
$$p_t(z): = 1  + z\frac{f^{\prime \prime}(z)}{f^{\prime}(z)} -2  \frac{z f^{\prime}(z) \overline{f(e^{2it}\bar{z})}}{1+f(z) \overline{f(e^{2it}\bar{z})}}$$ belongs to the Carath{\'e}odory class
$$\mathcal{P}:=\{ p : \D \to \mathbb{C} \text{ holomorphic} \, : \, p(0)=1, \RE p>0\} \, , $$
since $\RE p_t(r e^{it}) = h_f(r e^{it})$. Hence $p_t(e^{it} z)$ also belongs to $\mathcal{P}$. In view of the convexity and compactness of $\mathcal{P}$, the function 
$$P(z): =\frac{1}{2\pi} \int\limits_0^{2 \pi} p_t(e^{it}z) \, dt= 1 + c_1 z+ c_2 z^2+ ...  $$
also lies in $\mathcal{P}$.
We claim that $c_1=0$. In order to see this, recall that by assumption $f(0)=0$, so $f$ is \textit{holomorphic} in $\D$ with $f'\not=0$. Then $z w f''(zw)/f'(zw)$ is a holomorphic function of $w$ in a neighborhood of the closed unit disk, so the mean value property implies
 $$\frac{1}{2\pi} \int\limits_{0}^{2 \pi} \frac{e^{it} z f^{\prime \prime}(e^{it}z)  }{f^{\prime} (z)} \, dt  =  \frac{zw f^{\prime \prime}(zw) }{f^{\prime}(zw) } \bigg|_{w=0} =0 \, .$$
Hence 
$$ P(z) = 1 -\frac{1}{\pi} \int\limits_0^{2 \pi} \frac{e^{it} z f^{\prime}(e^{it}z ) \overline{f(e^{it} \bar{z})}}{1+ f(e^{it}z) \overline{f(e^{it}\bar{z})} } \, dt  =1 -2 \, |f^{\prime}(0)|^2 z^2 + \smallO(z^3),$$
using once more that $f(0)=0$.  Thus  the function $(P-1)/(P+1) : \D \to \D$ has a zero of order at least two at $z=0$, so the Schwarz lemma implies 
$$\left| \frac{P(z)-1}{P(z)+1} \right| \leq |z|^2, \quad z \in \D\, , $$
with equality at one point if and only $P$ has the form
$$ P(z)=\frac{1+\omega z^2}{1-\omega z^2} =1+2 \omega z^2+\ldots $$
for some $|\omega|=1$. We conclude that
$$\frac{1-r^2 }{1+r^2} \leq \RE P(r) \leq \frac{1+r^2}{1-r^2} \qquad \text{ for any } r \in (0,1) \, .$$
Equality for the left inequality holds if and only if $\omega=-1$ resp.~$|f'(0)|=1$. By Lemma \ref{sphderupper}
 this is the case if and only if $f(z)=\eta z$ for some $|\eta|=1$.
\end{proof}

The Gauss--Bonnet formula \eqref{sphcurvGB}  provides us with the following result. 

\begin{proposition}\label{connection*}
  Let $f:\D \to \CC$ be a meromorphic univalent function. Then
   \begin{equation}\label{connection}
     \int\limits_{0}^{2 \pi} f^{\sharp}(re^{it})^2 \,dt = \frac{2}{r^2} \iint\limits_{r \D} h_f(z) f^{\sharp}(z)^2 \,dA(z)
     \end{equation}
  for any $r \in (0,1)$.
\end{proposition}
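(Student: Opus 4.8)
The plan is to reduce the whole identity to a single application of Green's theorem, with the Liouville-type equation \eqref{eq:h} from Theorem \ref{hsuperharmonic} doing the essential work. I would introduce the two one-variable functions
$$A(r) := \ArS f(r \D) = \iint\limits_{r \D} f^{\sharp}(z)^2 \, dA(z), \qquad I(r) := \int\limits_0^{2 \pi} h_f(r e^{it}) \, dt.$$
Writing $A$ in polar coordinates and differentiating gives at once $A'(r) = r \int_0^{2\pi} f^{\sharp}(re^{it})^2 \, dt$, so the left-hand side of \eqref{connection} is nothing but $A'(r)/r$. It therefore suffices to prove that $A'(r) = (2/r) \iint_{r\D} h_f(z) f^{\sharp}(z)^2 \, dA(z)$.

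For the right-hand side I would invoke \eqref{eq:h}: since $\Delta h_f = -8 f^{\sharp}(z)^2 h_f$, the weighted area integral becomes $\iint_{r\D} h_f f^{\sharp}(z)^2 \, dA = -\tfrac18 \iint_{r\D} \Delta h_f \, dA$. Because $f$ is univalent, $f^{\sharp}$ never vanishes on $\D$ (it stays positive and smooth even across a pole of $f$), so $h_f = \RE(1 + 2 z \, \partial_z \log f^{\sharp})$ is smooth on all of $\D$ and Green's theorem applies on the disk $r\D$. Converting the area integral of the Laplacian into the boundary integral of the outward normal (i.e.\ radial) derivative and using $ds = r \, dt$ yields $\iint_{r\D} \Delta h_f \, dA = r \int_0^{2\pi} \partial_\rho h_f(re^{it}) \, dt = r \, I'(r)$, the last equality being differentiation under the integral sign. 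Hence $\iint_{r\D} h_f f^{\sharp}(z)^2 \, dA = -\tfrac{r}{8} I'(r)$.

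It remains to connect $I'(r)$ to $A'(r)$, and this is exactly where the spherical Gauss--Bonnet formula \eqref{sphcurvGB} enters: it states $I(r) = 2\pi - 4 A(r)$, whence $I'(r) = -4 A'(r)$. Substituting gives $\iint_{r\D} h_f f^{\sharp}(z)^2 \, dA = \tfrac{r}{2} A'(r)$, equivalently $A'(r) = (2/r) \iint_{r\D} h_f f^{\sharp}(z)^2 \, dA$, which combined with $A'(r) = r \int_0^{2\pi} f^{\sharp}(re^{it})^2 \, dt$ is precisely \eqref{connection}. The only points requiring care are the smoothness of $h_f$ needed to legitimize Green's theorem and the bookkeeping of the factors of $r$ and the numerical constants; once \eqref{eq:h} and \eqref{sphcurvGB} are in hand, no single step is a genuine obstacle.
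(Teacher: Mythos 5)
Your proof is correct and uses exactly the same three ingredients as the paper's argument: differentiating the Gauss--Bonnet identity \eqref{sphcurvGB} in $r$, Green's formula to convert the $r$-derivative of the circular mean of $h_f$ into $\frac{1}{r}\iint_{r\D}\De h_f\,dA$, and the identity $\De h_f=-8 (f^{\sharp})^2 h_f$ from Theorem \ref{hsuperharmonic}. The only difference is cosmetic bookkeeping (naming $A(r)$ and $I(r)$ and chaining the identities in a slightly different order), so this is essentially the paper's proof.
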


\begin{proof}
  Taking the derivative w.r.t.~$r$ in the Gauss--Bonnet formula
 \eqref{sphcurvGB}, we obtain
\begin{equation} \label{eq:derhf}
\frac{\partial}{\partial r} \left( \int\limits_0^{2\pi} h_f(r e^{it}) \, dt \right)= -4 \frac{\partial}{\partial r} \iint\limits_{r \D} f^{\sharp}(z)^2 \, dA(z) = -4 r \int\limits_{0}^{2 \pi} f^{\sharp}(re^{it})^2 \, dt \, .
\end{equation}
By  Green's formula, we also see that
$$\frac{\partial}{\partial r} \left( \int\limits_{0}^{2\pi} h_f(r e^{it}) \, dt \right) = \frac{1}{r} \iint\limits_{r \D} \De h_f(z) \, dA(z)\, .$$
Together with (\ref{eq:derhf}) this yields
 $$
 \frac{1}{r} \iint\limits_{r \D} \De h_f(z) \, dA(z) = -4 r \int\limits_{0}^{2 \pi} f^{\sharp}(re^{it})^2 \, dt \, .
 $$
 Since $\Delta h_f=-8 (f^{\sharp})^2 h_f$ by Theorem \ref{hsuperharmonic} we see that (\ref{connection}) holds.
\end{proof}

\section{Proofs of the spherical Schwarz lemmas} \label{sec:schwarz}

\begin{proof}[Proof of  Theorem \ref{thm:schwarzarea}]
Let $f : \D \to \CC$ be a spherically convex function and $0<r<1$. 
The Gauss--Bonnet formula (\ref{sphcurvGB}) 
and Theorem \ref{hintbounds} imply
$$ 4 \ArS f(r \D)=\int \limits_{0}^{2\pi} \left(1- h_f(r e^{it}) \right) \, dt \le 2\pi -2 \pi \frac{1-r^2}{1+r^2} = \frac{4\pi r^2}{1+r^2} = 4 \ArS (r \D)$$
whith equality for some $r \in (0,1)$ if and only if $f \in \Rot(\CC)$. 
\end{proof}

For the proof of Theorem \ref{lowerboundlength1} we first derive an auxiliary lemma. 

\begin{lemma} \label{lem}
  Let $f : \D \to \CC$ be a spherically convex function. Then the integral mean
$$ \frac{1}{2\pi} \int \limits_0^{2\pi} \log \left[ \left(1+r^2\right) f^{\sharp}(r e^{it})\right] \, dt $$
  is strictly increasing as a function of $r$ unless $f \in \Rot(\CC)$.
  \end{lemma}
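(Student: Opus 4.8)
The plan is to reduce the monotonicity of this integral mean directly to the Area Schwarz Lemma (Theorem \ref{thm:schwarzarea}). First I would set $u(z) := \log f^{\sharp}(z)$. Since $f$ is meromorphic and univalent, its spherical derivative $f^{\sharp}$ is the pullback of the (smooth, positive) spherical metric under the locally injective map $f$; in particular $f^{\sharp}$ is strictly positive and $C^{\infty}$ on all of $\D$ (including at a possible pole), so $u$ is a genuine $C^{\infty}$ real-valued function. Writing the integrand as $\log\bigl[(1+r^2) f^{\sharp}(re^{it})\bigr] = \log(1+r^2) + u(re^{it})$, the integral mean splits as
$$ I(r) := \frac{1}{2\pi}\int_0^{2\pi} \log\bigl[(1+r^2)f^{\sharp}(re^{it})\bigr]\,dt = \log(1+r^2) + M(r), \qquad M(r) := \frac{1}{2\pi}\int_0^{2\pi} u(re^{it})\,dt. $$
The first term is explicit, so everything reduces to computing the radial derivative of the circular mean $M(r)$.

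Next I would differentiate $M$. Because $u$ is smooth on the compact disk $\overline{r\D}\subset\D$, differentiation under the integral sign is legitimate, and Green's formula on $r\D$ gives
$$ 2\pi r\,M'(r) = \int_0^{2\pi} r\,\partial_r u(re^{it})\,dt = \iint_{r\D}\Delta u(z)\,dA(z). $$
The key input now is the Liouville equation $\Delta u = -4\,f^{\sharp}(z)^2$ already exploited in the proof of Theorem \ref{hsuperharmonic} (equivalently, $f^{\sharp}$ is a conformal metric of constant curvature $+4$). Substituting this identity yields
$$ M'(r) = \frac{1}{2\pi r}\iint_{r\D}\bigl(-4\bigr)f^{\sharp}(z)^2\,dA(z) = -\frac{2}{\pi r}\,\ArS f(r\D). $$

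Combining the two pieces, and using $\ArS(r\D)=\pi r^2/(1+r^2)$ to rewrite the elementary term as $\tfrac{2r}{1+r^2}=\tfrac{2}{\pi r}\ArS(r\D)$, I obtain
$$ I'(r) = \frac{2r}{1+r^2} - \frac{2}{\pi r}\,\ArS f(r\D) = \frac{2}{\pi r}\bigl(\ArS(r\D) - \ArS f(r\D)\bigr). $$
At this stage the problem has collapsed exactly onto Theorem \ref{thm:schwarzarea}, which asserts $\ArS f(r\D)\le \ArS(r\D)$ for every $r\in(0,1)$, with equality for some $r$ if and only if $f$ is a spherical isometry. Hence $I'(r)\ge 0$ throughout. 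If $f\notin\Rot(\CC)$, the equality case of Theorem \ref{thm:schwarzarea} fails for every $r$, so $\ArS f(r\D)<\ArS(r\D)$ and $I'(r)>0$ on $(0,1)$, giving strict monotonicity; whereas if $f\in\Rot(\CC)$ then $f$ preserves spherical area, $\ArS f(r\D)\equiv\ArS(r\D)$, and $I'\equiv 0$ (indeed $(1+r^2)f^{\sharp}\equiv 1$, so $I\equiv 0$). I expect the only genuinely delicate point to be not the routine Green's-formula computation but the recognition that $M'(r)$ equals, up to the explicit factor $-2/(\pi r)$, the spherical area of $f(r\D)$, so that the entire derivative becomes the area difference controlled by Theorem \ref{thm:schwarzarea}; the strictness and equality bookkeeping then follows verbatim from the equality statement of that theorem.
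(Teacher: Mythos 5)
Your argument is correct, and it reaches the same inequality as the paper by a slightly different route. The paper's proof is a two-line affair: it uses the pointwise identity
$$ r \frac{\partial }{\partial r} \log \left[ \left(1+r^2\right) f^{\sharp} (r e^{it}) \right]  = h_f(r e^{it}) -\frac{1-r^2}{1+r^2}, $$
integrates in $t$, and quotes Theorem \ref{hintbounds} (the sharp lower bound $\tfrac{1}{2\pi}\int_0^{2\pi}h_f \ge \tfrac{1-r^2}{1+r^2}$, with its equality case). You instead split off $\log(1+r^2)$, apply Green's formula to the circular mean of $u=\log f^{\sharp}$, and use the Liouville equation $\Delta u=-4(f^{\sharp})^2$ to identify $M'(r)=-\tfrac{2}{\pi r}\ArS f(r\D)$, so that the derivative of the integral mean becomes $\tfrac{2}{\pi r}\bigl(\ArS(r\D)-\ArS f(r\D)\bigr)$ and Theorem \ref{thm:schwarzarea} finishes the job, equality case included. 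These two routes are equivalent: by the Gauss--Bonnet formula \eqref{sphcurvGB} one has $\int_0^{2\pi}h_f(re^{it})\,dt = 2\pi - 4\ArS f(r\D)$, which is exactly how the paper deduces Theorem \ref{thm:schwarzarea} from Theorem \ref{hintbounds}; your Green's-formula-plus-Liouville computation is in effect a re-derivation of that link. What your version buys is a transparent geometric meaning for the derivative of the integral mean (it is, up to the factor $2/(\pi r)$, the area deficit $\ArS(r\D)-\ArS f(r\D)$), and it needs only the Liouville equation rather than the explicit formula for $h_f$; what it costs is that it hides the connection to total spherical curvature that the paper's formulation via $h_f$ makes visible. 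There is no circularity in invoking Theorem \ref{thm:schwarzarea}, since that theorem is proved independently of this lemma. Your smoothness remarks about $f^{\sharp}$ at a (necessarily simple) pole and the bookkeeping of the equality case are both fine.
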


  \begin{proof}
  It is easy to prove that 
$$r \frac{\partial }{\partial r} \log \left[ \left(1+r^2\right) f^{\sharp} (r e^{it}) \right]  = h_f(r e^{it}) -\frac{1-r^2}{1+r^2}. $$
The result therefore follows from Theorem \ref{hintbounds}. \end{proof}

\begin{proof}[Proof of Theorem \ref{lowerboundlength1}]
Let $f : \D \to \CC$ be a spherically convex function and $0<r<1$. Then 
\begin{eqnarray*}
  \frac{1}{2\pi} \LeS f(r \T) (1+r^2)  &=& r \left(1+r^2\right) \frac{1}{2\pi} \int \limits_{0}^{2\pi} f^{\sharp}(r e^{it}) \, dt \\
                                        &=&  \frac{r}{2\pi} \int \limits_0^{2\pi} \exp \left( \log \left[ \left(1+r^2\right) f^{\sharp}(r e^{it}) \right] \right) \, dt \\
  & \ge & r \cdot \exp \left( \frac{1}{2\pi} \int \limits_{0}^{2\pi} \log  \left[ \left(1+r^2\right) f^{\sharp}(r e^{it}) \right]  \, dt \right) \\ & \ge & r  \cdot\exp \left( \frac{1}{2\pi} \int \limits_{0}^{2\pi} \log  \left[ f^{\sharp}(0) \right]  \, dt \right)\\ &=& r f^{\sharp}(0) \, , 
                                           \end{eqnarray*}
                                           where we have first used Jensen's inequality and then Lemma \ref{lem}.
                                           Equality holds if and only if $f \in \Rot(\CC)$. Hence
                                           $$ \LeS f(r \T) \ge \frac{2\pi r}{1+r^2} f^{\sharp}(0) =\LeS (r \T) f^{\sharp}(0) \, , $$
                                           and  equality holds if and only if $f \in \Rot(\CC)$. 
                                           \end{proof}

                                           \begin{proof}[Proof of Remark \ref{rem:schwarzlength}]
                             
 We apply Lemma \ref{mu}.  
 Let us  denote by $\Le_h $ the length of a curve in $\D$ with respect to the hyperbolic metric
$$ \frac{|dz|}{1-|z|^2}$$
 of the unit disk. 
Set $$L(r):= \frac{\LeS f(r \T)}{\Le_h (r \T)} = \frac{1-r^2}{2 \pi} \int\limits_{0}^{2 \pi} f^{\sharp}(r e^{it}) \, dt= \frac{1}{2 \pi} \int\limits_{r \T} \left(1-|z|^2\right) f^{\sharp}(z)\,  |dz| . $$
According to Lemma \ref{mu}, $\left(1-|z|^2\right) f^{\sharp}(z)$ is a superharmonic function on $\D$ if and only if $f(\D)$ is a spherically convex domain. Since $f$ is a spherically convex function, $L(r)$ is the mean value of a superharmonic function and hence decreasing. Therefore  
$$ L(r) \leq \lim_{r \to 0^{+}} L(r) = f^{\sharp}(0) \, , $$
and hence 
$$  \LeS f(r \T) \leq \frac{2 \pi r }{1-r^2}f^{\sharp}(0) $$
for all $r \in (0,1)$.
\end{proof}

\section{Monotonicity of Spherical Area and Length}\label{monotonarealength}
Suppose $f : \mathbb{D} \to \CC$ is a spherically convex function. 

\begin{proof}[Proof of Theorem \ref{thm:monotarea}]
With the use of the Cauchy-Schwarz inequality, we obtain a lower bound for the derivative 
\begin{eqnarray}
\frac{\partial}{\partial r} \ArS f(r \D)&=& r \int \limits_{0}^{2 \pi} f^{\sharp }(r e^{it})^2 dt \nonumber \geq  \frac{r}{2\pi} \left(\int \limits_{0}^{2 \pi} f^{\sharp }(r e^{it}) dt \right)^2 \nonumber =\frac{1}{2\pi r } \LeS f(r \T)^2. 
\end{eqnarray}
Utilizing the isoperimetric inequality \eqref{isoperimetricineq}, it follows 
\begin{equation}\label{isoperimboundeq:1}
\frac{\partial}{\partial r} \ArS f(r \D) \geq \frac{2}{\pi r} \left( \pi \ArS f(r \D) - \ArS f(r \D)^2 \right). 
\end{equation}
We aim to find a lower bound of the derivative of the function $\AreS (r) = \frac{1+r^2}{\pi r^2} \ArS f(r \D) $ in order to prove its monotonicity. Accordingly, we compute with the help of (\ref{isoperimboundeq:1})
\begin{equation}\label{differineq:1}\begin{split}
\AreS^{\prime} (r) &= -\frac{2}{\pi r^3} \ArS f (r \D) + \frac{1+r^2}{\pi r^2} \frac{\partial }{\partial r} \ArS f(r \D) \\
&\geq\frac{2}{\pi r^3}  \ArS f (r \D) \left(-1 +\frac{1+r^2}{\pi} \left(\pi - \ArS f (r \D)\right) \right) \\
&=\frac{2}{\pi r}  \ArS f (r \D) \left(1- \frac{1+r^2}{\pi r^2}  \ArS f (r \D) \right) \geq 0
\end{split}
\end{equation}
due to Theorem \ref{thm:schwarzarea}. As a result, $\AreS(r)$ is increasing in $(0,1)$.
Now, let us assume that $\AreS^{\prime} (\xi) = 0$ for some $\xi \in (0,1)$. Then $$\ArS f(\xi \D) = \frac{\pi \xi^2}{1+\xi^2} = \ArS (\xi \D)$$
and according to Theorem \ref{thm:schwarzarea}, $f$ is a spherical isometry. On the contrary, if $f$ is a spherical isometry, then  $\ArS f(r \D) = \ArS (r \D) = \pi r^2/(1+r^2)$ for all $r \in (0,1)$. Hence $\AreS(r)$ is constant and equal to $1$. 
In summary, $\AreS(r)$ is a strictly increasing function of $r \in (0,1)$, unless $f$ is a spherical isometry in which case $\AreS(r) \equiv 1$. 
\end{proof}

\begin{proof}[Proof of Corollary \ref{lowerboundarea2}] If $f \in \Rot(\CC)$, then there is nothing to prove in view of Corollary \ref{lowerboundarea}.
  Suppose that $f:\D \to \CC$ is not a spherical isometry. As we see from  \eqref{differineq:1}, the function
  $x(r):= \AreS (r)$ satisfies the differential inequality
$$x^{\prime}(r) \geq  \frac{2r}{1+r^2} x(r) \left(1-x(r)\right) \, .$$
Since we assume $f \not\in\Rot(\CC)$, we have $x(r)<1$ for any $0 \le r<1$ by Theorem \ref{thm:monotarea}, and hence
$$ \int \limits_0^{R} \frac{x^{\prime}(r)}{x(r) \left(1-x(r)\right)} \, dR \geq  \int \limits_0^{R} \frac{2r}{1+r^2} \,  dR \, .$$
By elementary integration and reorganization of terms, 
 we are led to $$  x(R) \geq \frac{1+R^2}{1+x(0) R^2} x(0),\, $$
for every $R \in (0,1)$. However $x(0) = \lim\limits_{r \to 0} \AreS(r)= f^{\sharp }(0)^2$ and thus, replacing $R$ by $r$,
$$\AreS(r) \geq \frac{1+r^2 }{1+r^2 f^{\sharp }(0)^2} f^{\sharp }(0)^2\, , $$
which is equivalent to 
$$\ArS f(r \D) \geq \frac{\pi r^2}{1+r^2 f^{\sharp }(0)^2} f^{\sharp }(0)^2\, . $$
If $f$ has the form $f(z)=T(\eta z)$ for some $T \in \Rot(\CC)$ and $0<|\eta|\le 1$, then writing $f_{\eta}(z):=\eta z$, we have
$f^{\sharp}(0)=|\eta|$ and 
$$ \ArS f(r \D)=\ArS f_{\eta}(r\D)=\ArS (r|\eta| \D)=\frac{\pi r^2}{1+r^2 f^{\sharp }(0)^2} f^{\sharp }(0)^2 \, .$$
\end{proof}

\begin{proof}[Proof of Theorem \ref{lowerboundlength2}] Suppose that $f:\D \to \CC$ is a spherically convex function. It is then clear that $\ArS f(\D) \leq \pi/2$.
According to the isoperimetric inequality \eqref{isoperimetricineq}, we have 
$$\LeS f(r \T)^2 \ge 4 \pi \ArS f(r \D) -4   \ArS f(r \D)^2.$$
 The expression $4 \pi \ArS f(r \D) -4   \ArS f(r \D)^2$ is increasing with respect to $\ArS f(r \D)$ on the interval $(0,\pi/2)$, and we obtain  from Corollary \ref{lowerboundarea} that
\begin{equation*}
  \LeS f(r \T)^2  \ge   \frac{4 \pi^2 r^2}{1+r^2 f^{\sharp }(0)^2} f^{\sharp }(0)^2 \left(1 - \frac{r^2}{1+r^2 f^{\sharp }(0)^2 } f^{\sharp}(0)^2 \right)   =
 \frac{4 \pi^2 r^2 }{(1+r^2 f^{\sharp }(0)^2)^2}  f^{\sharp}(0)^2
\end{equation*}
for all $r \in (0,1)$. The proof of the equality statement is identical to the corresponding proof for Corollary \ref{lowerboundarea2} and will be omitted.
\end{proof}

\begin{remark}
The same proof as for Theorem \ref{lowerboundlength2} but using Theorem \ref{thm:schwarzarea} instead of Corollary \ref{lowerboundarea}
produces the inequality
  \begin{equation} \label{eq:lowerboundlengthnew}
    \LeS f(r \T) \ge \LeS (r \T) \, f^{\sharp}(0) \left( 1+r^2 \left(1-f^{\sharp}(0)^2 \right) \right) \quad \text{ for every } 0<r<1 
  \end{equation}
  with equality for some $0<r<1$ if and only if $f$ is a spherical isometry. Since $f^{\sharp}(0) \le 1$ with equality if and only if $f \in \Rot(\CC)$ the estimate (\ref{eq:lowerboundlengthnew}) is slightly more precise than Theorem \ref{lowerboundlength1}.
\end{remark}

\section{Monotonicity for length and total curvature} \label{monotoncurv}

\begin{proof}[Proof of Theorem \ref{thm:monotlength}] Let $f :\D \to \CC$ be a centrally normalized spherically convex function.
Then
$$ \LenS^{\prime}(r) = \frac{\partial }{\partial r} \left( \frac{1+r^2}{2 \pi} \int\limits_0^{2 \pi} f^{\sharp} (r e^{it}) \, dt \right) =  \frac{1}{2\pi} \int\limits_0^{2 \pi} \frac{\partial }{\partial r} \left[ \left(1+r^2\right) f^{\sharp} (r e^{it}) \right] \, dt \, .$$
Now it is easy to see (cf.~also  \cite[p.169]{mejiapommerenke}) that 
$$\frac{\partial }{\partial r} \left( (1+r^2) f^{\sharp} (r e^{it}) \right)  = \frac{f^{\sharp}(re^{it})}{r} \left[\left(1+r^2\right) h_f(r e^{it}) -\left(1-r^2\right) \right]. $$
As a result, 
\begin{equation}
    \LenS^{\prime}(r)= \frac{1}{2\pi r } \int\limits_0^{2 \pi} f^{\sharp}(re^{it}) \left[\left(1+r^2\right) h_f(r e^{it}) -\left(1-r^2\right) \right] dt \geq 0, 
\end{equation}
in view of  \eqref{lowerhf} with equality if and only if $f$ is a spherical isometry, in which case
$f^{\sharp } (z) = 1/ (1+|z|^2)$ and $\LeS f(r \T) = \LeS (r \T)$, so $\LenS(r) \equiv 1$.
\end{proof}

\begin{proof}[Proof of Theorem \ref{thm:monotcurv}]
Let $f: \D \to \CC $ be a spherically convex function which is centrally normalized. 
Following the calculations in \eqref{totalr} and \eqref{totalf}, the ratio of total curvature of $f (r \T)$ to the total curvature of $r \T$ is defined as 
$$\Phi_s(r) = \frac{1+r^2}{2 \pi \left(1-r^2\right)} \int\limits_0^{2 \pi} h_f(r e^{it}) \, dt.$$
If $f$ is a spherical isometry, then clearly $\Phi_s(r) \equiv 1$, so we assume from now on that $f$ is not a rotation. 
Taking the derivative of $\Phi_s$ and using  (\ref{eq:derhf})  we deduce 

$$
  \frac{\pi}{2r} \left(1-r^2\right)^2 \Phi_s^{\prime}(r)=
  \int\limits_0^{2 \pi} h_f(r e^{it}) \, dt -  \left(1-r^4\right) \int\limits_0^{2 \pi} f^{\sharp}(r e^{it})^2 \,  dt\, .$$
Note that from (\ref{eq:derhf}) and
$$\frac{\partial}{\partial r} \left( f^{\sharp}(r e^{it})^2 \right)= \frac{2}{r} f^{\sharp}(r e^{it})^2 \left(h_f(r e^{it}) -1 \right)$$
  we find by a straightforward computation
  \begin{eqnarray*}
   \frac{\partial }{\partial r} \left(  \frac{\pi}{2r} (1-r^2)^2 \Phi_s^{\prime}(r) \right) &=& 
\frac{\partial }{\partial r}   \int\limits_0^{2 \pi} \left(h_f(r e^{it}) -  \left(1-r^4\right) f^{\sharp}(r e^{it})^2  \right) \, dt\\
 &=& 2 \frac{\left(1-r^2\right)\left(1+r^2\right)}{r}\int\limits_0^{2\pi} f^{\sharp}(re^{it})^2 \left[ \frac{1-r^2}{1+r^2} -  h_f(r e^{it}) \right]  \, dt. 
\end{eqnarray*}
Since $f$ is not a spherical isometry, we deduce from (\ref{lowerhf}) that $[ \ldots]  < 0$. Therefore,  $$t \mapsto \pi \left(1-r^2\right)^2 \Phi_s'(r)/(2r)$$ is strictly decreasing, and hence
\hide{
$$1-3r^2 \leq 1-r^2 \leq 2\left(1-r^2\right) \Rightarrow \frac{1-3r^2}{1+r^2} \leq 2 h_f(re^{it}), \quad  \forall r \in (0,1),$$ which leads to 
$$  \int\limits_0^{2 \pi} h_f(r e^{it}) dt -  (1-r^4) \int\limits_0^{2 \pi} f^{\sharp }(r e^{it})^2 dt $$ being decreasing. As a result,  }
\begin{eqnarray*}
\frac{\pi}{2r} \left(1-r^2\right)^2 \Phi_s'(r) &> & \lim \limits_{r \to 1^{+}} \frac{\pi}{2r} \left(1-r^2\right)^2 \Phi_s'(r) \\
&=&  \lim_{r \to 1^{-}}   \int\limits_0^{2 \pi} h_f(r e^{it}) dt -  \left(1-r^4\right) \int\limits_0^{2 \pi} f^{\sharp}(r e^{it})^2 \, dt \\
    & \ge & \lim_{r \to 1^{-}}   \int\limits_0^{2 \pi} h_f(r e^{it}) \, dt  \geq 0 \, , \qquad 0<r<1 \, .
\end{eqnarray*}
Thus $\Phi_s^{\prime}(r)> 0$, so  $ \Phi_s$ is a strictly increasing function of $r$.
The proof of Theorem \ref{thm:monotcurv} is complete.
\end{proof}

\section{Examples}\label{examples}

In this final section we illustrate the monotonic behavior of the functions $\LenS$, $\AreS$ and $\Phi_s$ for various exemplary univalent meromorphic functions that are spherically convex and centrally normalized, spherically convex but not centrally normalized, and not spherically convex. The computations have been carried out with use of \textsc{Mathematica} software. 

A classical example of a spherically convex function that is also centrally normalized, see \cite{mejiapomsph}, is $$f_1(z) = \frac{\sqrt{1+z} -\sqrt{1-z}  }{\sqrt{1+z} +\sqrt{1-z} }, \quad z \in \D.  $$
Its spherical derivative is 
$$f_1^{\sharp}(z) = \frac{1}{|1-z^2|} \cdot \frac{|1 -\sqrt{1-z^2}|}{|z|^2+ |1-\sqrt{1-z^2}|^2}, \quad z \in \D.$$

\begin{figure}[h]
\centering
\begin{minipage}{0.5\textwidth}
  \centering
  \includegraphics[scale=0.55]{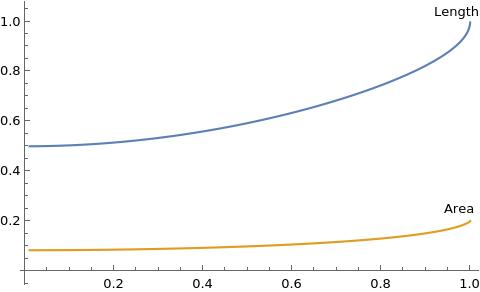}
  \caption{Graphs of $\LenS$ and $\AreS$ for $f_1(z)$}
  \label{fig:1}
\end{minipage}%
\begin{minipage}{0.5\textwidth}
  \centering
  \includegraphics[scale=0.55]{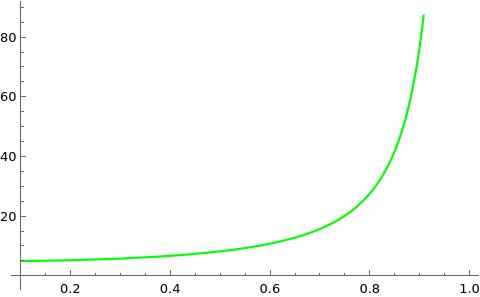}
  \caption{Graph of $\Phi_s$ for $f_1(z)$}
  \label{fig:2}
\end{minipage}
\end{figure}

In Figures \ref{fig:1} and \ref{fig:2}, one can see  that the functions $\LenS$, $\AreS$ and $\Phi_s$ are increasing functions of $r \in (0,1)$, for $f_1$.

In the case where central normalization is omitted, the monotonicity of $\Phi_s$ is disrupted. More specifically, let us define the function $$f_2(z)= e^z, \quad z \in \D.$$
Then $$h_{f_2}(z) = 1+ \frac{1-e^{2 \RE z}}{1+e^{2 \RE z }} \RE z $$
and hence
$$ h_{f_2}(r e^{it})  = 1+ \frac{1-e^{2 r\cos t}}{1+e^{2 r \cos t }} r \cos t=1 -  r \cos t \tanh( r \cos t)  >0,  $$ 
for all $t \in [0,2\pi]$ and $r \in (0,1)$, as we can in Figure \ref{fig:3}. Hence $f_2$ is spherically convex, but $\Phi_s$ is not increasing; see Figure \ref{fig:4}. 

\begin{figure}[h]
\centering
\begin{minipage}{.5\textwidth}
  \centering
  \includegraphics[scale=0.48]{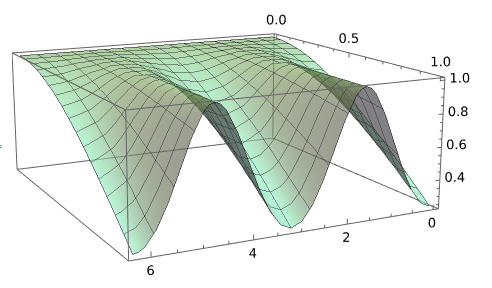}
  \caption{Graph of $h_{f_2}(r e^{it}) $}
  \label{fig:3}
\end{minipage}%
\begin{minipage}{.5\textwidth}
  \centering
  \includegraphics[scale=0.55]{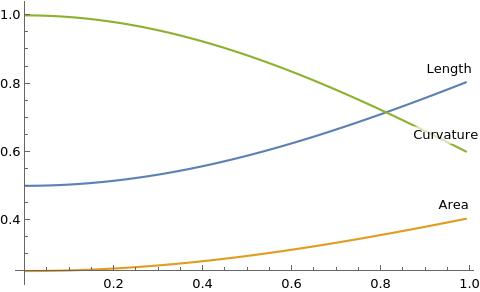}
  \caption{Graphs of $\LenS , \AreS$ and $\Phi_s$ for $e^z$}
  \label{fig:4}
\end{minipage}
\end{figure}

Let us define the function $f_3(z)=z^2 e^z$, for $z \in \D$. This is not a a spherically convex function, since $$h_{f_3}(z) = 1+ \RE \{z+2\} \left(1-\frac{2}{|z+2|^2} - 2 \frac{|z|^4 e^{2 \RE z}}{1+|z|^4 e^{2 \RE z}} \right) $$
and as we see in Figures \ref{fig:5} and \ref{fig:6}, for $r=|z|\geq 0.8$, it attains negative values. 
\begin{figure}[h]
\centering
\begin{minipage}{.5\textwidth}
  \centering
  \includegraphics[scale=0.45]{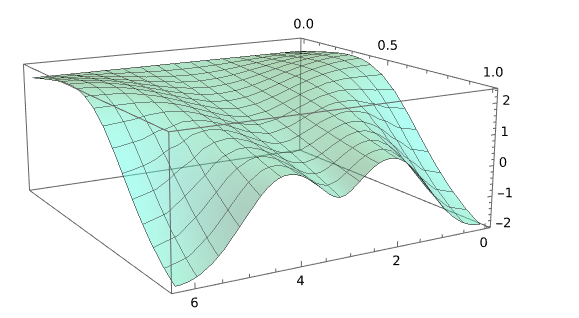}
  \caption{Graph of $h_{f_3}(r e^{it}) $}
  \label{fig:5}
\end{minipage}%
\begin{minipage}{.5\textwidth}
  \centering
  \includegraphics[scale=0.5]{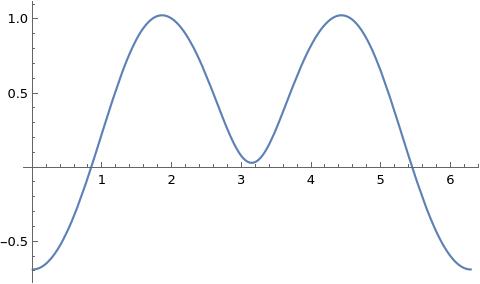}
  \caption{$h_{f_3}(re^{it})$, for $r=0.8$}
  \label{fig:6}
\end{minipage}
\end{figure}

\begin{figure}[h]
\centering
\begin{minipage}{.5\textwidth}
  \centering
  \includegraphics[scale=0.55]{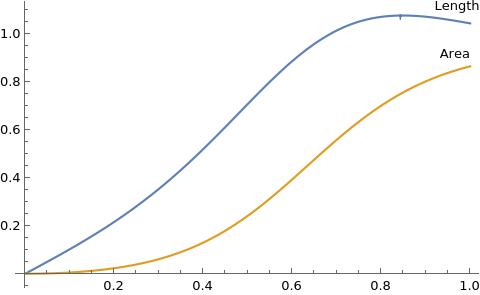}
  \caption{Graphs of $\LenS , \AreS$ for $z^2e^z$}
  \label{fig:7}
\end{minipage}%
\begin{minipage}{.5\textwidth}
  \centering
  \includegraphics[scale=0.55]{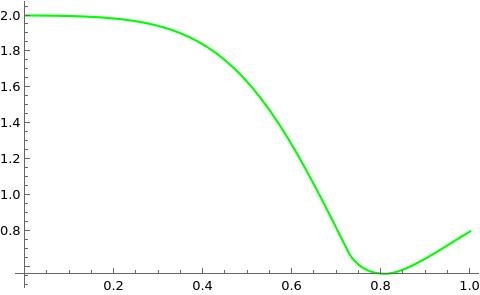}
  \caption{Graph of $\Phi_s$ for $z^2e^z$}
  \label{fig:8}
\end{minipage}
\end{figure}

Calculating its spherical derivative and producing the graphs of $\LenS$, $\AreS$ and $\Phi_s$ for $f_3(z)$, we obtain Figures \ref{fig:7} and \ref{fig:8}. 

In view of those, the significance of spherical convexity in Theorems \ref{thm:monotarea}, \ref{thm:monotlength} and \ref{thm:monotcurv} is straightforward. It is a necessary property that a function is spherically convex, so that the functions $\LenS(r) , \AreS(r)$ and $\Phi_s(r)$ are increasing functions of $r \in (0,1)$.

\section{Conflicts of interests/Competing interests}

The authors declare that there is no conflict of interest.

\end{document}